\newtheorem{theorem}{Theorem}[section]
\newtheorem{corollary}[theorem]{Corollary}
\newtheorem{definition}[theorem]{Definition}
\newtheorem{model}[theorem]{Model}
\newtheorem*{cuh}{The Computable Universe Hypothesis}
\begin{document}

\title{Semantics of Computable Physical Models}

\author{Matthew~P. Szudzik}

\begin{abstract}
This article reformulates the theory of computable physical models, previously introduced by the author, as a branch of applied model theory in first-order logic.  It provides a semantic approach to the philosophy of science that incorporates aspects of operationalism and Popper's degrees of falsifiability.
\end{abstract}

\date{28 April 2023}

\maketitle

\section{Introduction}

Consider a sentence that expresses some mathematical statement.  To specify the intended meaning of that sentence, the established practice in mathematical logic is to provide a structure $\mathfrak{A}$ for the language of the sentence.  In the formalism of Enderton~\cite[Sect.\ 2.2]{Enderton2001}, if the sentence is written in a first-order language, then
\begin{enumerate}
\renewcommand{\theenumi}{\roman{enumi}}
\item to each nonlogical predicate symbol $P$ in the language, the structure assigns a set-theoretic relation $P^\mathfrak{A}$;
\item to each function symbol $f$ in the language, the structure assigns a set-theoretic function $f^\mathfrak{A}$;
\item to the quantifier symbol in the language, the structure assigns a nonempty set $\lvert\mathfrak{A}\rvert$; and
\item to each constant symbol in the language, the structure assigns a member of $\lvert\mathfrak{A}\rvert$.
\end{enumerate}
The sentence is understood to be a statement about these set-theoretic objects (that is, these relations, functions, and members of the set $\lvert\mathfrak{A}\rvert$).  In this sense, the mathematical statement is a statement about sets.

Now consider a sentence that expresses a statement about the physical universe.  If this sentence is written in a first-order language, then a structure may assign set-theoretic objects to the symbols in the language, as was done in the previous paragraph.  But this sentence is a statement about the physical universe, not a statement about sets.  To specify the intended meaning of the sentence, we can choose to associate
%
%
a physical meaning with some of those set-theoretic objects.\footnote{
This sort of approach to specifying the meaning of a physical statement is known as the \emph{semantic approach}, and it is often contrasted with the \emph{syntactic approach} of the logical positivists.  Liu~\cite[pp.\ 149--154]{Liu1997} provides brief summaries of both approaches.
}
%
%
%
This association of physical meaning with some of the objects in the structure provides a \emph{physical semantics} for the structure.  We will say that this structure, together with its physical semantics, form a \emph{physical model}.\thinspace\footnote{
%
%
We sometimes omit the adjective ``physical'' when it is clear that the models being discussed are physical models, or when the semantics being discussed are physical semantics.
}

This article is concerned with the physical semantics of computable physical models.  Computable physical models were previously introduced by the author~\cite{Szudzik2012,Szudzik2013} to formalize the notion of a computer model for a physical phenomenon~\cite[pp.\ 482--483]{Szudzik2013}.  We begin the present article by reformulating the definition of a computable physical model so that techniques from mathematical logic (especially from model theory) can be applied more directly.  In particular, in Section~\ref{S:nip-models} we introduce a family of many-sorted first-order languages, and we define the notion of a nonnegative integer physical model.  Some basic properties of these models are discussed in Section~\ref{S:properties}.  Informal models that are commonly encountered in the sciences, including discrete models, continuous models, and statistical models,
%
%
can often be formalized as nonnegative integer physical models.  And the definition of a nonnegative integer physical model allows one to discuss, in a straightforward manner, the computability of these models.  Issues related to computability are discussed in Section~\ref{S:cp-models}.  In particular, we introduce computable physical models, and we provide a formulation of the computable universe hypothesis.  Derived observable quantities are then formalized in Section~\ref{S:definitional}, and in Section~\ref{S:restrictions} we discuss the restriction of a nonnegative integer physical model to a set of possible measurement results.  These concepts are illustrated in an extended example in Section~\ref{S:example}.  Up to this point, we will have only discussed discrete models.  The last three sections of the article are concerned with continuous models and statistical models.  In particular, nonnegative integer physical models that are specified by a nonempty closed set of real numbers are described in Section~\ref{S:real}.  This idea is generalized to topological spaces with countable bases in Section~\ref{S:t-spaces}.  Then, probabilities are introduced to nonnegative integer physical models in Section~\ref{S:probabilities}.

Throughout this article, we assume that readers are familiar with the notational and terminological conventions in Enderton's logic textbook~\cite{Enderton2001}.

\section{Nonnegative Integer Physical Models}\label{S:nip-models}

A language for first-order number theory\footnote{
There are various standard ways to formulate a first-order language of number theory.  We assume that the language has a predicate symbol for equality, function symbols for addition and multiplication, and possibly other nonlogical symbols.  See Enderton~\cite[Sect.\ 3.0]{Enderton2001} and Rogers~\cite[p.\ 96]{Rogers1987}, for example.
} can be regarded as a many-sorted language that has only one sort: the sort $N$ of nonnegative integers.  We define a \emph{nonnegative integer physical language} to be any many-sorted first-order language that can be obtained from a language for first-order number theory by introducing
\begin{enumerate}
\renewcommand{\theenumi}{\roman{enumi}}
\item one or more new sorts, one of which is designated as sort $S$;
\item\label{P:oq-symbols} one or more function symbols of sort $\langle S,N\rangle$ (these are said to be the \emph{symbols for observable quantities}); and
\item\label{P:additional-symbols} zero or more additional symbols of any sort.
\end{enumerate}
The symbols introduced in parts~(\ref{P:oq-symbols}) and~(\ref{P:additional-symbols}) are said to be the \emph{physical symbols} in the language.  Then, a \emph{nonnegative integer physical model} is
\begin{enumerate}
\renewcommand{\theenumi}{\roman{enumi}}
\item a structure $\mathfrak{A}$ for a nonnegative integer physical language, such that the nonlogical symbols of first-order number theory are assigned their traditional set-theoretic meanings (in particular, $\lvert\mathfrak{A}\rvert_N$ is the set $\mathbb{N}$ of nonnegative integers); together with
\item a rule, such that for each symbol for an observable quantity that is in the language of $\mathfrak{A}$, the rule assigns a physical measuring operation
%
%
to the symbol, and this operation encodes the result of each measurement as a nonnegative integer.\footnote{
Although it might be more traditional (for example, see Rosen~\cite[p.\ 26]{Rosen1978}) to use real numbers as the results of measurements, nonnegative integers can be used without any loss of generality because, in actual practice, the result of every measurement is recorded as a finite sequence of symbols chosen from a finite alphabet, and such sequences can be encoded as nonnegative integers.
}
\end{enumerate}
The members of $\lvert\mathfrak{A}\rvert_S$ are said to be the \emph{states} of the nonnegative integer physical model.

If $f$ is a symbol for an observable quantity, we say that $f^\mathfrak{A}$ is the corresponding \emph{observable quantity}, and we use $\mathrm{op}(f)$ to denote the physical measuring operation that is assigned to $f$.  In a \emph{faithful}\thinspace\footnote{
%
%
Our notion of faithfulness plays a role that is similar to that of van Fraassen's \emph{empirical adequacy}~\cite[p.\ 12]{VanFraassen1980}.
} nonnegative integer physical model, $f^\mathfrak{A}$ and $\mathrm{op}(f)$ are associated in the following manner.

\begin{definition}\label{D:faithful}
Let $\mathfrak{A}$ be a structure for a nonnegative integer physical model.  The model is said to be \emph{faithful} if and only if, for each symbol $f$ for an observable quantity and each nonnegative integer $n$, if a measurement result of $\mathrm{op}(f)$ is ever equal to $n$, then there exists a state $s\in\lvert\mathfrak{A}\rvert_S$ such that $f^\mathfrak{A}(s)=n$.
\end{definition}

One example of a nonnegative integer physical model, chosen from particle physics, is the following.
\begin{model}\label{M:baryon}
Consider a two-sorted nonnegative integer physical language with a symbol $f$ for an observable quantity, and with no additional physical symbols.  Let $\mathfrak{A}$ be a structure for this language, where $\lvert\mathfrak{A}\rvert_S$ is the set of all $s\in\mathbb{N}$.  Define $\mathrm{op}(f)$ to be an operation that counts the total number of baryons and antibaryons produced in a collision of two protons, and let $f^\mathfrak{A}(s)=2s+2$.
\end{model}
%
%
\noindent
The law of baryon number conservation implies that this model is faithful.\footnote{
Protons are baryons, and the law of baryon number conservation states that the number of baryons minus the number of antibaryons is always conserved~\cite[Sect.\ 1.6]{Griffiths2010}.  Although violations of this law have been suspected to exist, no violation has ever been observed~\cite[Sect.\ 3.8]{Perkins2000}.
}  And assuming that it is faithful, the model predicts that the total number of baryons and antibaryons produced in a collision of two protons can never be an odd number.

Following the semantic approach of Dalla Chiara Scabia and Toraldo di Francia~\cite[p.\ 5]{DallaChiaraScabia1973}, we insist that ``operations that define a quantity via a measurement procedure need not exclude, indeed necessarily include, a certain amount of data processing.''  Going further, we identify the concept of a measuring operation with a generalization of the concept of an \emph{effective procedure}~\cite[Sect.\ 1.7]{Enderton2001}.  This generalization can be obtained by allowing, in addition to the usual data processing instructions of an effective procedure, instructions for interacting with the physical universe, where any interaction is treated as a nondeterministic oracle.\footnote{
Oracles and nondeterministic computations are described in theoretical computer science textbooks, such as the textbook by Davis et al.~\cite{Davis1994}.
}
%
%
In the spirit of the concept of an effective procedure, we place no bounds on the time, resources, or preparations that might be required to perform a measuring operation.  A measuring operation may be a simple act, such as listening for a particular sound with one's ears, and recording a $1$ or $0$ to signify whether or not the sound was heard.  Alternatively, a measuring operation may require elaborately constructed measuring instruments.  A measuring operation might extend for a period of time that is much longer than the duration of the phenomenon being observed, especially if the operation requires time-consuming preparations or a lengthy mathematical analysis of data that has been collected.  If an error analysis of the collected data is part of the process of producing the measurement result, then that error analysis is necessarily part of the measuring operation.\footnote{
Error bounds on a data point can be encoded as a nonnegative integer, as described in Section~\ref{S:real}.
}  And a nonnegative integer physical model, even if it is faithful, does not guarantee that the measuring operations can always be performed, or performed to completion.  Indeed, a measuring operation might require more resources than are available in the entire universe.  But Definition~\ref{D:faithful} does guarantee that for a faithful nonnegative integer physical model with structure $\mathfrak{A}$, if a measuring operation $\mathrm{op}(f)$ can be performed to completion, then the result of that measurement will be equal to $f^\mathfrak{A}(s)$ for some state $s\in\lvert\mathfrak{A}\rvert_S$.

We use Cantor's pairing function $J(a,b)=\tfrac{1}{2}\bigl((a+b)^2+3a+b\bigr)$ to encode any ordered pair $\langle a,b\rangle$ of nonnegative integers as a single nonnegative integer.  We write $J(a,b,c)$ as an abbreviation for $J\bigl(J(a,b),c\bigr)$, to encode ordered triples of nonnegative integers.  We also write $J(a,b,c,d)$ as an abbreviation for $J\bigl(J\bigl(J(a,b),c\bigr),d\bigr)$, to encode ordered quadruples of nonnegative integers, and so on.  Any measuring operation that encodes the results of two or more measuring operations in this manner is said to be a \emph{joint measuring operation}.  For example, consider the following nonnegative integer physical model for the motion of a projectile fired from a cannon at $5$ meters per second in an inertial reference frame, and in the absence of any external forces, such as gravity or air resistance.
\begin{model}\label{M:cannon}
Consider a two-sorted nonnegative integer physical language with a symbol $f$ for an observable quantity, and with no additional physical symbols.  Let $\mathfrak{A}$ be a structure for this language, where $\lvert\mathfrak{A}\rvert_S$ is the set of all $t\in\mathbb{N}$.  Define $\mathrm{op}(f)$ to be an operation that measures the nonnegative integer number of seconds $s$ since the projectile was fired, together with the number of meters $m$ between the cannon and the projectile at that time.\footnote{
We require the cannon to be at rest in the inertial reference frame, and for the time and distance measurements to be made relative to this frame.  We also require the distance measurement to be made within $\pm\Delta s$ seconds of the nonnegative integer number of seconds, and to be accurate to within $\pm\Delta m$ meters, where $5\lvert\Delta s\rvert+\lvert\Delta m\rvert<0.5$.
%
%
The resulting distance is then rounded to the nearest nonnegative integer.
}  The result of this joint measuring operation is encoded as $J(s,m)$.  Let $f^\mathfrak{A}(t)=J(t,5t)$.
\end{model}
\noindent
If one asserts that this model is faithful, then one asserts that for each measurement $J(s,m)$, there exists a $t\in\mathbb{N}$ such that $J(s,m)=J(t,5t)$.

More complicated examples of nonnegative integer physical models are described in subsequent sections.  Notable examples include a model for the pressure, volume, and temperature of one mole of a gas (Model~\ref{M:ideal-gas}), a model for lower and upper bounds on the number of molecules in a sample of a chemical compound (Model~\ref{M:molecules-bounds}), and a model for $\beta^-$ decay in a sample of copper-64 (Model~\ref{M:decay}).

\section{Properties of Nonnegative Integer Physical Models}\label{S:properties}

Given a nonnegative integer physical model, one of the most central questions is whether that model is faithful.  But there are other questions that can also be asked about nonnegative integer physical models.  For example, given a structure $\mathfrak{A}$ for a nonnegative integer physical language where the nonlogical symbols of first-order number theory are assigned their traditional set-theoretic meanings, one can ask whether there exists \emph{any faithful model} that has the given structure $\mathfrak{A}$.  The following theorem shows that this question has a trivial answer.

\begin{theorem}
Let $\mathfrak{A}$ be any structure for a nonnegative integer physical language where the nonlogical symbols of first-order number theory are assigned their traditional set-theoretic meanings.  Then there exists a faithful nonnegative integer physical model $\mathcal{A}$ that has the structure $\mathfrak{A}$.
\end{theorem}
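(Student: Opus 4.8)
The plan is to observe that the structure $\mathfrak{A}$ already fixes every set-theoretic interpretation, including the observable quantity $f^\mathfrak{A}$ for each symbol $f$ for an observable quantity. To produce a model $\mathcal{A}$ with structure $\mathfrak{A}$, therefore, the only remaining freedom lies in the choice of the rule that assigns a measuring operation $\mathrm{op}(f)$ to each such symbol $f$. I would exploit this freedom to make faithfulness hold vacuously, which is what makes the answer trivial.

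First, I would recall from the discussion following Definition~\ref{D:faithful} that a measuring operation is a generalization of an effective procedure and need not ever be performed to completion; indeed, the preceding text explicitly allows operations that require more resources than are available in the entire universe. Accordingly, for each symbol $f$ for an observable quantity I would define $\mathrm{op}(f)$ to be a measuring operation that never produces a result---for instance, a procedure that simply loops forever without halting. Such an operation is a legitimate measuring operation, and it satisfies the requirement that each measurement result be encoded as a nonnegative integer only vacuously, since it has no measurement results to encode.

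Next, I would verify faithfulness directly from Definition~\ref{D:faithful}. Fix a symbol $f$ for an observable quantity and a nonnegative integer $n$. Because $\mathrm{op}(f)$ never produces any result, a measurement result of $\mathrm{op}(f)$ is never equal to $n$, so the antecedent of the conditional in Definition~\ref{D:faithful} fails and the conditional holds vacuously. Since this holds for every such $f$ and every $n$, the model $\mathcal{A}$ is faithful, which completes the argument.

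The step I expect to require the most care is not a computation but a conceptual check: confirming that a never-halting procedure genuinely counts as a measuring operation under the paper's generalized notion of effective procedure. The discussion preceding the theorem supports this, since it places no bounds on time or resources and explicitly contemplates operations that cannot be performed to completion. Once this is granted, the result is immediate. (If one instead preferred an operation that always halts, an alternative would be to let $\mathrm{op}(f)$ always output a single fixed value lying in the range of $f^\mathfrak{A}$; this range is nonempty because $\lvert\mathfrak{A}\rvert_S$ is nonempty. This variant works but is less clean than the vacuous construction, since it presupposes knowledge of an element of the range.)
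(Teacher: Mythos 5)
Your proposal is correct and follows essentially the same route as the paper: the paper's proof likewise assigns to each symbol $f$ a measuring operation that always fails and can never be completed (explicitly noting the analogy with a program that loops forever), so that faithfulness holds vacuously by Definition~\ref{D:faithful}. Your conceptual check that such a never-halting operation qualifies as a measuring operation is well taken and is exactly what the paper's surrounding discussion licenses.
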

\begin{proof}
Define $\mathcal{A}$ to be a nonnegative integer physical model that has the structure $\mathfrak{A}$, and such that, for each symbol $f$ for an observable quantity, $\mathrm{op}(f)$ is a physical measuring operation that always fails and can never be completed.\footnote{
This measuring operation is analogous to an effective procedure for the computable partial function whose domain is empty~\cite[p.\ 252]{Enderton2001}.  That is, it is analogous to a computer program that always aborts, or that always goes into an infinite loop.
}  Then it is vacuously true,
%
%
by Definition~\ref{D:faithful}, that $\mathcal{A}$ is faithful.
\end{proof}

Various relations between structures are studied in mathematical logic.  For example, a structure $\mathfrak{B}$ might be a reduct of, an extension of, or isomorphic to a structure $\mathfrak{A}$.   In many-sorted first-order logic, these relations can be defined as follows.
%
%
A structure $\mathfrak{B}$ is said to be a \emph{reduct} of a structure $\mathfrak{A}$ if and only if
\begin{enumerate}
\renewcommand{\theenumi}{\roman{enumi}}
\item each nonlogical symbol and each equality symbol in the language of $\mathfrak{B}$ is a nonlogical symbol or equality symbol, respectively, in the language of $\mathfrak{A}$; and
\item in the structure $\mathfrak{B}$, each nonlogical symbol is assigned the same set-theoretic object that it is assigned in the structure $\mathfrak{A}$.
\end{enumerate}
A structure $\mathfrak{B}$ is said to be an \emph{extension} of a structure $\mathfrak{A}$ if and only if
\begin{enumerate}
\renewcommand{\theenumi}{\roman{enumi}}
\item $\mathfrak{A}$ and $\mathfrak{B}$ have the same language;
\item for each sort $i$, $\lvert\mathfrak{A}\rvert_i\subseteq\lvert\mathfrak{B}\rvert_i$;
\item for each $n$-place predicate symbol $P$ of sort $\langle i_1,\ldots,i_n\rangle$, $P^\mathfrak{A}$ is the restriction of $P^\mathfrak{B}$ to $\lvert\mathfrak{A}\rvert_{i_1}\times\cdots\times\lvert\mathfrak{A}\rvert_{i_n}$;
\item for each $n$-place function symbol $f$ of sort $\langle i_1,\ldots,i_n,i_{n+1}\rangle$, $f^\mathfrak{A}$ is the restriction of $f^\mathfrak{B}$ to $\lvert\mathfrak{A}\rvert_{i_1}\times\cdots\times\lvert\mathfrak{A}\rvert_{i_n}$; and
\item for each constant symbol $c$, $c^\mathfrak{A}=c^\mathfrak{B}$.
\end{enumerate}
And we say that a structure $\mathfrak{B}$ is \emph{isomorphic} to a structure $\mathfrak{A}$ if and only if
\begin{enumerate}
\renewcommand{\theenumi}{\roman{enumi}}
\item $\mathfrak{A}$ and $\mathfrak{B}$ have the same language;
\item for each sort $i$ there is a one-to-one correspondence $h_i$ from $\lvert\mathfrak{B}\rvert_i$ onto $\lvert\mathfrak{A}\rvert_i$;
\item for each $n$-place predicate symbol $P$ of sort $\langle i_1,\ldots,i_n\rangle$, and for each $\langle b_1,\ldots,\linebreak[0]b_n\rangle$
%
%
in $\lvert\mathfrak{B}\rvert_{i_1}\times\cdots\times\lvert\mathfrak{B}\rvert_{i_n}$, $\langle b_1,\ldots,b_n\rangle\in P^\mathfrak{B}$ if and only if $\bigl\langle h_{i_1}(b_1),\ldots,\linebreak[0]h_{i_n}(b_n)\bigr\rangle\in P^\mathfrak{A}$;
%
%
\item for each $n$-place function symbol $f$ of sort $\langle i_1,\ldots,i_n,i_{n+1}\rangle$, and for each $\langle b_1,\ldots,b_n\rangle$ in $\lvert\mathfrak{B}\rvert_{i_1}\times\cdots\times\lvert\mathfrak{B}\rvert_{i_n}$, $h_{i_{n+1}}\bigl(f^\mathfrak{B}(b_1,\ldots,b_n)\bigr)=f^\mathfrak{A}\bigl(h_{i_1}(b_1),\ldots,\linebreak[0]h_{i_n}(b_n)\bigr)$; and
%
%
\item for each constant symbol $c$ of sort $i$, $h_i(c^\mathfrak{B})=c^\mathfrak{A}$.
\end{enumerate}
Note that in the special case when $\mathfrak{A}$ and $\mathfrak{B}$ are structures for nonnegative integer physical models, it can be shown~\cite[Chap.\ 2, Thm.\ 3.4]{Auslander1974} that if $\mathfrak{A}$ and $\mathfrak{B}$ are isomorphic, then $h_N$ is necessarily the identity function.

Now, these relations can be generalized in the following manner.
\begin{definition}\label{D:isomorphic}
Let $\mathcal{A}$ be a nonnegative integer physical model with a structure $\mathfrak{A}$, and let $\mathcal{B}$ be a nonnegative integer physical model with a structure $\mathfrak{B}$.  We say that the model $\mathcal{B}$ is a \emph{reduct} of, an \emph{extension} of, or \emph{isomorphic} to the model $\mathcal{A}$ if and only if
\begin{enumerate}
\renewcommand{\theenumi}{\roman{enumi}}
\item the structure $\mathfrak{B}$ is a reduct of, an extension of, or isomorphic to the structure $\mathfrak{A}$, respectively; and
\item for each symbol $f$ for an observable quantity in the language of $\mathfrak{B}$, $f$ is assigned the same measuring operation in both models.
\end{enumerate}
In addition, we say that $\mathcal{A}$ is an \emph{expansion} of $\mathcal{B}$ if and only if $\mathcal{B}$ is a reduct of $\mathcal{A}$.  And we say that $\mathcal{A}$ is a \emph{submodel} of $\mathcal{B}$ if and only if $\mathcal{B}$ is an extension of $\mathcal{A}$.
\end{definition}

This definition ensures that if $\mathcal{A}$ and $\mathcal{B}$ are isomorphic nonnegative integer physical models, then $\mathcal{A}$ is faithful if and only if $\mathcal{B}$ is faithful.
%
%
We also have the following corollaries of Definition~\ref{D:isomorphic}.
\begin{corollary}\label{C:reduct}
Let $\mathcal{A}$ and $\mathcal{B}$ be any nonnegative integer physical models such that $\mathcal{B}$ is a reduct of $\mathcal{A}$.  If $\mathcal{A}$ is faithful, then $\mathcal{B}$ is faithful.
\end{corollary}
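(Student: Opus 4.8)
The plan is to chase the definitions directly. Faithfulness, by Definition~\ref{D:faithful}, is a condition quantified over symbols $f$ for observable quantities and nonnegative integers $n$; so I would fix an arbitrary such $f$ in the language of $\mathcal{B}$ together with an $n$ that arises as a measurement result of $\mathrm{op}(f)$, and then exhibit the required state $s\in\lvert\mathfrak{B}\rvert_S$ by appealing to the faithfulness of $\mathcal{A}$.

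First I would record what being a reduct buys us. Since $\mathcal{B}$ is a reduct of $\mathcal{A}$, Definition~\ref{D:isomorphic} tells us that the structure $\mathfrak{B}$ is a reduct of $\mathfrak{A}$ and that each symbol $f$ for an observable quantity in the language of $\mathcal{B}$ is assigned the same measuring operation $\mathrm{op}(f)$ in both models. The language of $\mathfrak{B}$ is therefore a sublanguage of that of $\mathfrak{A}$, so every such $f$ is also a symbol for an observable quantity in the language of $\mathcal{A}$. Because a reduct preserves the universe of each retained sort and the interpretation of each retained nonlogical symbol, I would note the two key equalities $\lvert\mathfrak{B}\rvert_S=\lvert\mathfrak{A}\rvert_S$ and $f^{\mathfrak{B}}=f^{\mathfrak{A}}$.

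Granting these observations, the core of the argument is immediate. Suppose a measurement result of $\mathrm{op}(f)$ is ever equal to $n$. As $\mathrm{op}(f)$ is literally the same operation in $\mathcal{A}$, the same $n$ is a measurement result of $\mathrm{op}(f)$ in $\mathcal{A}$; since $\mathcal{A}$ is faithful, there is a state $s\in\lvert\mathfrak{A}\rvert_S$ with $f^{\mathfrak{A}}(s)=n$. Transporting $s$ along the two equalities above gives $s\in\lvert\mathfrak{B}\rvert_S$ and $f^{\mathfrak{B}}(s)=n$, which is exactly what Definition~\ref{D:faithful} requires for $\mathcal{B}$. Since $f$ and $n$ were arbitrary, $\mathcal{B}$ is faithful.

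I expect the only genuine subtlety to be bookkeeping about the many-sorted reduct: one must confirm that forming a reduct neither alters the domain $\lvert\mathfrak{A}\rvert_S$ of the designated sort $S$ nor changes the interpretation of any observable-quantity symbol that survives into $\mathcal{B}$. This rests on the standard model-theoretic fact that a reduct keeps the universes of its sorts and the meanings of its remaining symbols fixed; once that is in hand there is no real obstacle, because faithfulness quantifies only over data—the states, the values $f^{\mathfrak{A}}(s)$, and the operations $\mathrm{op}(f)$—that the reduct inherits unchanged from $\mathcal{A}$.
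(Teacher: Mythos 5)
Your proposal is correct and matches the paper's own proof essentially step for step: both arguments fix an observable-quantity symbol $f$ of $\mathcal{B}$ and a measurement result $n$ of $\mathrm{op}(f)$, note that the reduct shares the measuring operation and satisfies $\lvert\mathfrak{B}\rvert_S=\lvert\mathfrak{A}\rvert_S$ and $f^{\mathfrak{B}}=f^{\mathfrak{A}}$, and then transport the state supplied by the faithfulness of $\mathcal{A}$. Your explicit remark that a many-sorted reduct preserves the universe of sort $S$ and the interpretations of surviving symbols is exactly the (implicit) bookkeeping the paper relies on in writing $\lvert\mathfrak{A}\rvert_S=\lvert\mathfrak{B}\rvert_S$ and $f^{\mathfrak{A}}(s)=f^{\mathfrak{B}}(s)$.
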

\begin{proof}
Let $\mathfrak{A}$ be the structure of $\mathcal{A}$, and let $\mathfrak{B}$ be the structure of $\mathcal{B}$.  Suppose that $\mathcal{A}$ is faithful.  Now consider any symbol $f$ for an observable quantity of $\mathcal{B}$, consider any nonnegative integer $n$, and suppose that a measurement result of $\mathrm{op}(f)$ is equal to $n$.  Because $\mathcal{B}$ is a reduct of $\mathcal{A}$, $f$ is also a symbol for an observable quantity of $\mathcal{A}$.  And since $\mathcal{A}$ is faithful, there exists an $s\in\lvert\mathfrak{A}\rvert_S=\lvert\mathfrak{B}\rvert_S$ such that $n=f^\mathfrak{A}(s)=f^\mathfrak{B}(s)$.  By Definition~\ref{D:faithful}, $\mathcal{B}$ is faithful.
\end{proof}

\begin{corollary}\label{C:one-symbol}
Let $\mathcal{A}$ be any nonnegative integer physical model, and let $B$ be the set of all reducts of $\mathcal{A}$ that are nonnegative integer physical models with exactly one symbol for an observable quantity.  Then, $\mathcal{A}$ is faithful if and only if every member of the set $B$ is faithful.
\end{corollary}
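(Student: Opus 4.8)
The plan is to prove both directions of the biconditional separately, using Corollary~\ref{C:reduct} for the forward implication and a symbol-by-symbol reduction for the reverse implication. For the forward direction, I would observe that every member of $B$ is, by its very definition, a reduct of $\mathcal{A}$. Hence if $\mathcal{A}$ is faithful, then Corollary~\ref{C:reduct} immediately yields that each member of $B$ is faithful. This direction requires no further work.

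The reverse direction is where the content lies. Assuming that every member of $B$ is faithful, I want to verify the condition of Definition~\ref{D:faithful} directly for $\mathcal{A}$. First I would note that, since a nonnegative integer physical language must contain at least one symbol for an observable quantity while each member of $B$ has exactly one physical symbol, that lone physical symbol of each member of $B$ must in fact be a symbol for an observable quantity, with no room left for an additional symbol. The key construction is then the following: given an arbitrary symbol $f$ for an observable quantity of $\mathcal{A}$, I would form the reduct $\mathcal{A}_f$ obtained by retaining the symbols of first-order number theory, the sorts $N$ and $S$, and the single physical symbol $f$, while forgetting every other physical symbol and every sort not referenced by the retained symbols. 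I would then check that $\mathcal{A}_f$ is a legitimate nonnegative integer physical model with exactly one physical symbol, so that $\mathcal{A}_f \in B$ and is therefore faithful by hypothesis.

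To conclude, I would transfer the faithfulness of $\mathcal{A}_f$ back to the symbol $f$ in $\mathcal{A}$. Since $\mathcal{A}_f$ is a reduct of $\mathcal{A}$, the symbol $f$ keeps the same interpretation, so that $f^{\mathfrak{A}_f}=f^\mathfrak{A}$; the sort $S$ keeps the same universe, so that $\lvert\mathfrak{A}_f\rvert_S=\lvert\mathfrak{A}\rvert_S$; and by Definition~\ref{D:isomorphic} the same measuring operation $\mathrm{op}(f)$ is assigned in both models. Thus any measurement result of $\mathrm{op}(f)$ equal to some nonnegative integer $n$ is, by faithfulness of $\mathcal{A}_f$, realized as $f^{\mathfrak{A}_f}(s)=n$ for some $s\in\lvert\mathfrak{A}_f\rvert_S$, and hence as $f^\mathfrak{A}(s)=n$ for some $s\in\lvert\mathfrak{A}\rvert_S$. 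Since $f$ was an arbitrary symbol for an observable quantity of $\mathcal{A}$, Definition~\ref{D:faithful} then gives that $\mathcal{A}$ is faithful.

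I expect the main obstacle to be the bookkeeping in the key construction, namely confirming that discarding the other physical symbols together with any now-unused sorts genuinely produces a reduct in the sense of Definition~\ref{D:isomorphic} that still qualifies as a nonnegative integer physical model with exactly one physical symbol. Everything after that is a routine transfer of the defining condition of faithfulness across the reduct, using that interpretations, the universe of sort $S$, and the assigned measuring operations are all preserved.
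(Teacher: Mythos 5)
Your proof is correct and takes essentially the same approach as the paper: the forward direction via Corollary~\ref{C:reduct}, and the reverse direction by selecting, for each symbol $f$ for an observable quantity, the member of $B$ having $f$ as its only physical symbol and transferring its witness state back to $\mathcal{A}$ using the shared universe $\lvert\mathfrak{A}\rvert_S$, interpretation $f^\mathfrak{A}$, and measuring operation. The only difference is that you explicitly construct and verify that this one-symbol reduct lies in $B$, whereas the paper simply takes such a member of $B$ as given.
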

%
%
\begin{proof}
If $\mathcal{A}$ is faithful, then by Corollary~\ref{C:reduct}, every member of $B$ is faithful.  Conversely, suppose that every member of $B$ is faithful.  Now consider any symbol $f$ for an observable quantity of $\mathcal{A}$, consider any nonnegative integer $n$, and suppose that a measurement result of $\mathrm{op}(f)$ is equal to $n$.  Let $\mathcal{B}$ be a member of $B$ that has $f$ as its only symbol for an observable quantity, and let $\mathfrak{A}$ and $\mathfrak{B}$ be the structures of $\mathcal{A}$ and $\mathcal{B}$, respectively.  Because every member of $B$ is faithful, there exists an $s\in\lvert\mathfrak{B}\rvert_S=\lvert\mathfrak{A}\rvert_S$ such that $n=f^\mathfrak{B}(s)=f^\mathfrak{A}(s)$.  By Definition~\ref{D:faithful}, $\mathcal{A}$ is faithful.
\end{proof}

\begin{corollary}\label{C:extension}
Let $\mathcal{A}$ and $\mathcal{B}$ be any nonnegative integer physical models such that $\mathcal{B}$ is an extension of $\mathcal{A}$.  If $\mathcal{A}$ is faithful, then $\mathcal{B}$ is faithful.
\end{corollary}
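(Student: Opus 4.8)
The plan is to mirror the proof of Corollary~\ref{C:reduct}, with the roles played by the state sets reversed. First I would let $\mathfrak{A}$ be the structure of $\mathcal{A}$ and $\mathfrak{B}$ be the structure of $\mathcal{B}$, and assume that $\mathcal{A}$ is faithful. Since $\mathcal{B}$ is an extension of $\mathcal{A}$, Definition~\ref{D:isomorphic} tells me that $\mathfrak{B}$ is an extension of $\mathfrak{A}$—so the two models share the same language—and that each symbol for an observable quantity is assigned the same measuring operation in both models.

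Next I would fix an arbitrary symbol $f$ for an observable quantity of $\mathcal{B}$ and an arbitrary nonnegative integer $n$, and suppose that some measurement result of $\mathrm{op}(f)$ is equal to $n$. Because the language is shared, $f$ is also a symbol for an observable quantity of $\mathcal{A}$, and because $f$ is assigned the same measuring operation in both models, that same measurement result of $\mathrm{op}(f)$ equals $n$ when regarded in $\mathcal{A}$. Invoking the faithfulness of $\mathcal{A}$ through Definition~\ref{D:faithful}, I would obtain a state $s\in\lvert\mathfrak{A}\rvert_S$ with $f^\mathfrak{A}(s)=n$. It then remains to promote this witness from $\mathcal{A}$ to $\mathcal{B}$: since $\mathfrak{B}$ is an extension of $\mathfrak{A}$, we have $\lvert\mathfrak{A}\rvert_S\subseteq\lvert\mathfrak{B}\rvert_S$, so $s$ is itself a state of $\mathcal{B}$, and $f^\mathfrak{A}$ is the restriction of $f^\mathfrak{B}$ to $\lvert\mathfrak{A}\rvert$, so $f^\mathfrak{B}(s)=f^\mathfrak{A}(s)=n$. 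Hence there is a state of $\mathcal{B}$ mapped to $n$, and Definition~\ref{D:faithful} gives the faithfulness of $\mathcal{B}$.

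The argument is essentially routine once the definitions are unwound, so there is no serious obstacle; the only point that genuinely requires care is confirming that the inclusion of state sets runs in the favorable direction. In the reduct case of Corollary~\ref{C:reduct} the two structures share the same universe and one merely needs that every observable symbol of $\mathcal{B}$ is also one of $\mathcal{A}$, whereas here the witness state $s$ is produced inside the smaller structure $\mathfrak{A}$ and must be transported into the larger structure $\mathfrak{B}$. This transport is exactly what the extension relation supplies, via $\lvert\mathfrak{A}\rvert_S\subseteq\lvert\mathfrak{B}\rvert_S$ together with the agreement of $f^\mathfrak{B}$ and $f^\mathfrak{A}$ on $\lvert\mathfrak{A}\rvert$, and so verifying these two facts is the crux of the proof.
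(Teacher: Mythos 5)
Your proof is correct and takes essentially the same route as the paper's: both obtain a witness state $s\in\lvert\mathfrak{A}\rvert_S$ from the faithfulness of $\mathcal{A}$ and transport it into $\mathcal{B}$ via $\lvert\mathfrak{A}\rvert_S\subseteq\lvert\mathfrak{B}\rvert_S$ together with $f^\mathfrak{B}(s)=f^\mathfrak{A}(s)=n$, then invoke Definition~\ref{D:faithful}. Your additional remarks spelling out that the extension relation guarantees a shared language and identical measuring operations are harmless elaboration of what the paper's proof leaves implicit.
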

\begin{proof}
Let $\mathfrak{A}$ be the structure of $\mathcal{A}$, and let $\mathfrak{B}$ be the structure of $\mathcal{B}$.  Suppose that $\mathcal{A}$ is faithful.  Now consider any symbol $f$ for an observable quantity of $\mathcal{B}$, consider any nonnegative integer $n$, and suppose that a measurement result of $\mathrm{op}(f)$ is equal to $n$.  Because $\mathcal{A}$ is faithful, there exists an $s\in\lvert\mathfrak{A}\rvert_S$ such that $f^\mathfrak{A}(s)=n$.  But $\mathcal{B}$ is an extension of $\mathcal{A}$, so $s\in\lvert\mathfrak{A}\rvert_S\subseteq\lvert\mathfrak{B}\rvert_S$ and $n=f^\mathfrak{A}(s)=f^\mathfrak{B}(s)$.  By Definition~\ref{D:faithful}, $\mathcal{B}$ is faithful.
\end{proof}

Hence, every extension of a faithful nonnegative integer physical model is itself faithful.  But in a certain sense, an extension of a model is also weaker than the original model.  This notion of the relative strength or weakness of a nonnegative integer physical model is formalized in the following manner.

Consider any nonnegative integer physical models $\mathcal{A}$ and $\mathcal{B}$ with structures $\mathfrak{A}$ and $\mathfrak{B}$, respectively.  Suppose that both models have the same language and measuring operations,
%
%
and consider any symbol $f$ for an observable quantity.  Let $\mathrm{ran}(\,f^\mathfrak{A})$ denote the range of $f^\mathfrak{A}$, and similarly for $f^\mathfrak{B}$.  That is,
\begin{equation*}
\mathrm{ran}(\,f^\mathfrak{A})=\bigl\{\,f^\mathfrak{A}(s)\,\bigm\vert\,s\in\lvert\mathfrak{A}\rvert_S\,\bigr\}.
\end{equation*}
Note that if $\mathcal{A}$ is faithful, then every measurement result for $\mathrm{op}(f)$ is a member of $\mathrm{ran}(\,f^\mathfrak{A})$.  For this reason, we regard $\mathrm{ran}(\,f^\mathfrak{A})$ as the set of possible measurement results for $\mathrm{op}(f)$ that are \emph{allowed} by the model $\mathcal{A}$.  And if $\mathrm{ran}(\,f^\mathfrak{A})\subseteq\mathrm{ran}(\,f^\mathfrak{B})$, then $\mathcal{A}$ might allow fewer possible values for $\mathrm{op}(f)$ than the model $\mathcal{B}$ allows.  In this sense, the observable quantity $f^\mathfrak{A}$ is \emph{stronger} than the observable quantity $f^\mathfrak{B}$.  A similar notion of relative strength was used by Popper.  In particular, Popper~\cite[Sect.\ 20]{Popper1959} identified the strength of a theory with its ``degree of falsifiability'', stating that a theory is strengthened if it ``now rules out more than it did previously: it prohibits more.''  In a similar way, we say that the model $\mathcal{A}$ is \emph{stronger} than the model $\mathcal{B}$ if and only if, for each symbol $f$ for an observable quantity, $\mathrm{ran}(\,f^\mathfrak{A})\subseteq\mathrm{ran}(\,f^\mathfrak{B})$.
%
%
This notion is also expressed by saying that $\mathcal{B}$ is \emph{weaker} than $\mathcal{A}$.

Because an extension of a nonnegative integer physical model is always weaker than the original model, Corollary~\ref{C:extension} is a special case of the following, more general corollary.
\begin{corollary}
Let $\mathcal{A}$ and $\mathcal{B}$ be nonnegative integer physical models that have the same language and measuring operations, and let $\mathcal{A}$ be stronger than $\mathcal{B}$.  If $\mathcal{A}$ is faithful, then $\mathcal{B}$ is faithful.
\end{corollary}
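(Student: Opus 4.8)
The plan is to verify the condition of Definition~\ref{D:faithful} directly for $\mathcal{B}$, drawing on the faithfulness of $\mathcal{A}$ together with the range containments that define the relation ``stronger than.'' First I would fix an arbitrary symbol $f$ for an observable quantity and an arbitrary nonnegative integer $n$, and assume that some measurement result of $\mathrm{op}(f)$ equals $n$. Because the two models share the same language and measuring operations, $f$ is a symbol for an observable quantity in $\mathcal{A}$ as well, and $\mathrm{op}(f)$ denotes the very same physical operation in both models, so the hypothesized measurement result is equally a measurement result in $\mathcal{A}$.

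Next I would apply the faithfulness of $\mathcal{A}$: since a measurement result of $\mathrm{op}(f)$ equals $n$, there is a state $s\in\lvert\mathfrak{A}\rvert_S$ with $f^\mathfrak{A}(s)=n$, and hence $n\in\mathrm{ran}(f^\mathfrak{A})$. The hypothesis that $\mathcal{A}$ is stronger than $\mathcal{B}$ gives $\mathrm{ran}(f^\mathfrak{A})\subseteq\mathrm{ran}(f^\mathfrak{B})$, so that $n\in\mathrm{ran}(f^\mathfrak{B})$. Unwinding the definition of the range then yields a state $s'\in\lvert\mathfrak{B}\rvert_S$ with $f^\mathfrak{B}(s')=n$. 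Since $f$ and $n$ were arbitrary, Definition~\ref{D:faithful} shows that $\mathcal{B}$ is faithful.

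I do not expect any genuine obstacle here; the statement is essentially a transitivity argument chaining a single instance of faithfulness with a single range containment, and the author has already noted in the preceding discussion that every measurement result for $\mathrm{op}(f)$ in a faithful model lies in $\mathrm{ran}(f^\mathfrak{A})$. The only point that requires care is the bookkeeping that the shared language and measuring operations allow one to regard the \emph{same} measurement result, and the \emph{same} symbol $f$, in both models; once that identification is made, the two containments compose at once. This also fits the author's remark that Corollary~\ref{C:extension} is the special case in which the range containments arise from the state-set inclusion $\lvert\mathfrak{A}\rvert_S\subseteq\lvert\mathfrak{B}\rvert_S$ of an extension.
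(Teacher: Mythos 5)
Your proof is correct and follows essentially the same route as the paper's: use faithfulness of $\mathcal{A}$ to place the measurement result $n$ in $\mathrm{ran}(\,f^\mathfrak{A})$, apply the containment $\mathrm{ran}(\,f^\mathfrak{A})\subseteq\mathrm{ran}(\,f^\mathfrak{B})$ from the definition of ``stronger,'' and extract a state witnessing $f^\mathfrak{B}(s)=n$ to conclude via Definition~\ref{D:faithful}. The extra bookkeeping you note about the shared language and measuring operations is implicit in the paper's version but is the same argument.
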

\begin{proof}
Let $\mathfrak{A}$ be the structure of $\mathcal{A}$, and let $\mathfrak{B}$ be the structure of $\mathcal{B}$.  Suppose that $\mathcal{A}$ is faithful.  Now consider any symbol $f$ for an observable quantity, consider any nonnegative integer $n$, and suppose that a measurement result of $\mathrm{op}(f)$ is equal to $n$.  Because $\mathcal{A}$ is faithful, $n\in\mathrm{ran}(\,f^\mathfrak{A})\subseteq\mathrm{ran}(\,f^\mathfrak{B})$.  Hence, there exists an $s\in\lvert\mathfrak{B}\rvert_S$ such that $f^\mathfrak{B}(s)=n$.  By Definition~\ref{D:faithful}, $\mathcal{B}$ is faithful.
\end{proof}

We define $\mathcal{A}$ to be \emph{observationally equivalent} to $\mathcal{B}$ if and only if both $\mathcal{A}$ is stronger than $\mathcal{B}$, and $\mathcal{B}$ is stronger than $\mathcal{A}$.  That is, $\mathcal{A}$ is observationally equivalent to $\mathcal{B}$ if and only if, for each symbol $f$ for an observable quantity, $\mathrm{ran}(\,f^\mathfrak{A})=\mathrm{ran}(\,f^\mathfrak{B})$.  If two nonnegative integer physical models are isomorphic, then they are also observationally equivalent.
%
%
In addition, we have the following corollary of the definition of observational equivalence.
\begin{corollary}\label{C:countable}
Let $\mathcal{A}$ be any nonnegative integer physical model.  Then, there exists a nonnegative integer physical model $\mathcal{B}$ with a structure $\mathfrak{B}$ such that $\lvert\mathfrak{B}\rvert_S=\mathbb{N}$, and such that $\mathcal{A}$ is observationally equivalent to $\mathcal{B}$.
\end{corollary}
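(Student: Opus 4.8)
The plan is to construct $\mathcal{B}$ by leaving the language and measuring operations of $\mathcal{A}$ untouched—so that observational equivalence is even defined—and replacing only the interpretation of the state sort. Writing $\mathfrak{A}$ for the structure of $\mathcal{A}$, I would build a structure $\mathfrak{B}$ for the same language whose only essential difference from $\mathfrak{A}$ is that $\lvert\mathfrak{B}\rvert_S=\mathbb{N}$, while arranging that $\mathrm{ran}(\,f^\mathfrak{B})=\mathrm{ran}(\,f^\mathfrak{A})$ for every symbol $f$ for an observable quantity. Since observational equivalence asks for nothing beyond equality of these ranges, this will suffice.

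The heart of the construction concerns the observable quantities. For each symbol $f$ for an observable quantity, set $R_f=\mathrm{ran}(\,f^\mathfrak{A})$. Because $\lvert\mathfrak{A}\rvert_S$ is nonempty, $R_f$ is a nonempty subset of $\mathbb{N}$, so there is a surjection of $\mathbb{N}$ onto $R_f$; in fact one can take the canonical map sending $n$ to $n$ when $n\in R_f$ and to $\min R_f$ otherwise, which needs no appeal to choice even if the language has uncountably many such symbols. I would then define $f^\mathfrak{B}$ to be this surjection, viewed as a function $\lvert\mathfrak{B}\rvert_S\to\lvert\mathfrak{B}\rvert_N$. These definitions may be made independently across the various symbols for observable quantities, since such symbols share only their common domain $\mathbb{N}$ and impose no joint constraint; by construction each $f^\mathfrak{B}$ has range exactly $R_f$.

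It remains to interpret the other symbols so that $\mathfrak{B}$ is a genuine structure. The nonlogical symbols of first-order number theory keep their traditional meanings, as required, and any symbol not involving the sort $S$ retains its interpretation in $\mathfrak{A}$. Each remaining symbol—an additional predicate, function, or constant symbol whose sort involves $S$—is assigned an arbitrary admissible interpretation over the new domain (for instance, constants of sort $S$ to $0$, predicates to the empty relation, and functions with output sort $S$ to the constant value $0$), the only requirement being totality and well-definedness. Taking $\mathcal{B}$ to be the model with structure $\mathfrak{B}$ and the same measuring operations as $\mathcal{A}$, one checks that $\mathrm{ran}(\,f^\mathfrak{B})=R_f=\mathrm{ran}(\,f^\mathfrak{A})$ for every $f$, so $\mathcal{A}$ and $\mathcal{B}$ are observationally equivalent.

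I expect no genuine obstacle: the mathematical content reduces to the observation that each range $R_f$ is a nonempty subset of $\mathbb{N}$ and hence is the image of some map out of $\mathbb{N}$. The only care required is bookkeeping—verifying that every symbol whose sort mentions $S$ receives a total, well-defined interpretation so that $\mathfrak{B}$ really is a structure for the nonnegative integer physical language—together with confirming that $R_f$ is nonempty, which follows at once from the nonemptiness of $\lvert\mathfrak{A}\rvert_S$.
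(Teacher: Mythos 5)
Your proposal is correct and takes essentially the same route as the paper: keep the language and measuring operations, set $\lvert\mathfrak{B}\rvert_S=\mathbb{N}$, and for each symbol $f$ define $f^\mathfrak{B}$ to be any function with domain $\mathbb{N}$ and range $\mathrm{ran}(\,f^\mathfrak{A})$, which exists because that range is a nonempty subset of $\mathbb{N}$. Your explicit retraction map $n\mapsto n$ for $n\in\mathrm{ran}(\,f^\mathfrak{A})$ and $n\mapsto\min\mathrm{ran}(\,f^\mathfrak{A})$ otherwise, and your bookkeeping for the remaining symbols of sort involving $S$, merely make concrete what the paper leaves implicit.
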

\begin{proof}
Let $\mathcal{B}$ be a nonnegative integer physical model that has the same language and measuring operations as $\mathcal{A}$, and that has a structure $\mathfrak{B}$ which is defined so that $\lvert\mathfrak{B}\rvert_S=\mathbb{N}$.  Let $\mathfrak{A}$ be the structure of $\mathcal{A}$, and consider any symbol $f$ for an observable quantity.  Because $\mathrm{ran}(\,f^\mathfrak{A})\subseteq\mathbb{N}$ is countable, there is a function with domain $\mathbb{N}$ and range $\mathrm{ran}(\,f^\mathfrak{A})$.  Define $f^\mathfrak{B}$ to be this function.  Because $\mathrm{ran}(\,f^\mathfrak{A})=\mathrm{ran}(\,f^\mathfrak{B})$ for each symbol $f$ for an observable quantity, $\mathcal{A}$ is observationally equivalent to $\mathcal{B}$.
\end{proof}

\section{Computable Physical Models}\label{S:cp-models}

Given a faithful nonnegative integer physical model with a structure $\mathfrak{A}$ and a symbol $f$ for an observable quantity, each measurement result of $\mathrm{op}(f)$ is necessarily a member of $\mathrm{ran}(\,f^\mathfrak{A})$.  But $\mathrm{ran}(\,f^\mathfrak{A})$ might contain additional values that are not measurement results for $\mathrm{op}(f)$.  The set that contains \emph{exactly} those nonnegative integers which are measurement results for $\mathrm{op}(f)$ is denoted $O_f$.  Then, we say that a nonnegative integer physical model with a structure $\mathfrak{A}$ is \emph{maximally faithful} if and only if, for each symbol $f$ for an observable quantity, $\mathrm{ran}(\,f^\mathfrak{A})=O_f$.  One consequence of this definition is that if a model is maximally faithful, then $O_f$ cannot be empty for any symbol $f$ for an observable quantity.  This is because the definition of a structure~\cite[Sect.\ 4.3]{Enderton2001} requires $\mathrm{ran}(\,f^\mathfrak{A})$ to be nonempty.

Another way to characterize the maximally faithful nonnegative integer physical models is given by the following theorem.
\begin{theorem}\label{T:max-strong}
A nonnegative integer physical model $\mathcal{A}$ is maximally faithful if and only if
\begin{enumerate}
\renewcommand{\theenumi}{\roman{enumi}}
\item\label{P:faithful} $\mathcal{A}$ is faithful; and
\item\label{P:weaker} every faithful nonnegative integer physical model that has the same language and measuring operations as $\mathcal{A}$ is weaker than $\mathcal{A}$.
\end{enumerate}
\end{theorem}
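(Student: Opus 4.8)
The plan is to translate both faithfulness and maximal faithfulness into set containments between $O_f$ and $\mathrm{ran}(\,f^\mathfrak{A})$, and then to exploit the fact that models sharing the same measuring operations share the same sets $O_f$. First I would record the reformulation of Definition~\ref{D:faithful}: a model with structure $\mathfrak{A}$ is faithful if and only if $O_f\subseteq\mathrm{ran}(\,f^\mathfrak{A})$ for every symbol $f$ for an observable quantity, since $O_f$ collects exactly those $n$ that occur as measurement results of $\mathrm{op}(f)$. Maximal faithfulness is then the stronger statement that $\mathrm{ran}(\,f^\mathfrak{A})=O_f$ for every such $f$. I would also stress the key point that $O_f$ depends only on the measuring operation $\mathrm{op}(f)$, so any two models with the same measuring operations assign the same $O_f$ to each $f$.

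For the forward direction, suppose $\mathcal{A}$ is maximally faithful. Then $\mathrm{ran}(\,f^\mathfrak{A})=O_f$ gives in particular $O_f\subseteq\mathrm{ran}(\,f^\mathfrak{A})$, which is faithfulness, establishing~(\ref{P:faithful}). For~(\ref{P:weaker}), let $\mathcal{B}$ be any faithful model with the same language and measuring operations, with structure $\mathfrak{B}$. Faithfulness of $\mathcal{B}$ gives $O_f\subseteq\mathrm{ran}(\,f^\mathfrak{B})$, and since the measuring operations agree, this $O_f$ is the same set as for $\mathcal{A}$; hence $\mathrm{ran}(\,f^\mathfrak{A})=O_f\subseteq\mathrm{ran}(\,f^\mathfrak{B})$ for every $f$, which is exactly the assertion that $\mathcal{A}$ is stronger than $\mathcal{B}$, that is, that $\mathcal{B}$ is weaker than $\mathcal{A}$.

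For the converse, assume~(\ref{P:faithful}) and~(\ref{P:weaker}) and argue by contradiction. Faithfulness already yields $O_f\subseteq\mathrm{ran}(\,f^\mathfrak{A})$ for all $f$, so if $\mathcal{A}$ is not maximally faithful there is a symbol $f$ and an integer $n\in\mathrm{ran}(\,f^\mathfrak{A})\setminus O_f$. The plan is to manufacture a faithful competitor $\mathcal{B}$ that is not weaker than $\mathcal{A}$, contradicting~(\ref{P:weaker}). Following the construction in the proof of Corollary~\ref{C:countable}, I would take $\mathcal{B}$ to have the same language and measuring operations as $\mathcal{A}$ and a structure $\mathfrak{B}$ with $\lvert\mathfrak{B}\rvert_S=\mathbb{N}$; for the distinguished symbol I set $\mathrm{ran}(\,f^\mathfrak{B})=O_f\cup\{n+1\}$, and for every other observable symbol $g$ I set $\mathrm{ran}(g^\mathfrak{B})=O_g\cup\{0\}$. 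Each target set is a nonempty subset of $\mathbb{N}$, so a surjection from $\mathbb{N}$ onto it exists and may be used to define the interpretation; the remaining sorts and symbols of $\mathfrak{B}$ are interpreted so as to form a valid structure, a choice immaterial to both faithfulness and relative strength.

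Finally I would verify the two properties of $\mathcal{B}$. Since $O_g\subseteq\mathrm{ran}(g^\mathfrak{B})$ for every observable symbol, $\mathcal{B}$ is faithful. But $n\in\mathrm{ran}(\,f^\mathfrak{A})$ while $n\notin O_f\cup\{n+1\}=\mathrm{ran}(\,f^\mathfrak{B})$, so $\mathrm{ran}(\,f^\mathfrak{A})\not\subseteq\mathrm{ran}(\,f^\mathfrak{B})$ and hence $\mathcal{B}$ is not weaker than $\mathcal{A}$, contradicting~(\ref{P:weaker}). I expect the main obstacle to be exactly this construction: one must simultaneously keep every range nonempty (so that $\mathfrak{B}$ really is a structure) and keep $O_g$ inside each range (so that $\mathcal{B}$ is faithful) while excluding the single witness value $n$ from $\mathrm{ran}(\,f^\mathfrak{B})$. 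The device of adjoining $\{n+1\}$ rather than using $O_f$ alone is what lets the argument also cover the degenerate case $O_f=\varnothing$, where $\mathrm{ran}(\,f^\mathfrak{A})$ is still nonempty and the witness $n$ is guaranteed to exist.
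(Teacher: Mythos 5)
Your proof is correct and follows essentially the same strategy as the paper's: the forward direction is identical, and the converse applies condition~(ii) to a purpose-built faithful competitor $\mathcal{B}$ with $\lvert\mathfrak{B}\rvert_S=\mathbb{N}$ whose ranges are shrunk toward the sets $O_f$, with nonemptiness of the ranges the only obstacle in both treatments. The sole difference is the device used to handle that obstacle: the paper sets $\mathrm{ran}(\,f^\mathfrak{B})=O_f$ when $O_f\neq\varnothing$ and $\mathbb{N}-\{a_f\}$ otherwise and argues directly, whereas you pick a witness $n\in\mathrm{ran}(\,f^\mathfrak{A})\setminus O_f$ and pad with $O_f\cup\{n+1\}$ (and $O_g\cup\{0\}$ for the other symbols), arguing by contradiction---both devices are sound and serve the same purpose.
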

\begin{proof}
Consider any nonnegative integer physical model $\mathcal{A}$ with a structure $\mathfrak{A}$, and suppose that $\mathcal{A}$ is maximally faithful.  Then consider any symbol $f$ for an observable quantity of $\mathcal{A}$, consider any nonnegative integer $n$, and suppose that a measurement result of $\mathrm{op}(f)$ is equal to $n$.  Because $\mathcal{A}$ is maximally faithful, $n\in O_f=\mathrm{ran}(\,f^\mathfrak{A})$.  Hence, there exists an $s\in\lvert\mathfrak{A}\rvert_S$ such that $f^\mathfrak{A}(s)=n$.  By Definition~\ref{D:faithful}, $\mathcal{A}$ is faithful.  Now consider any faithful nonnegative integer physical model $\mathcal{B}$ that has the same language and measuring operations as $\mathcal{A}$.  Let $\mathfrak{B}$ be the structure of $\mathcal{B}$.  Because $\mathcal{B}$ is faithful, $\mathrm{ran}(\,f^\mathfrak{A})=O_f\subseteq\mathrm{ran}(\,f^\mathfrak{B})$.  Thus, $\mathcal{B}$ is weaker than $\mathcal{A}$.  We have shown that conditions~(\ref{P:faithful}) and~(\ref{P:weaker}) hold if $\mathcal{A}$ is maximally faithful.

Conversely, suppose that conditions~(\ref{P:faithful}) and~(\ref{P:weaker}) hold.  Let $\mathcal{B}$ be a nonnegative integer physical model that has the same language and measuring operations as $\mathcal{A}$, and that has a structure $\mathfrak{B}$ which is defined so that $\lvert\mathfrak{B}\rvert_S=\mathbb{N}$, and so that
\begin{equation*}
\mathrm{ran}(\,f^\mathfrak{B})=\begin{cases}
O_f &\text{if $O_f$ is nonempty}\\
\mathbb{N}-\{a_f\} &\text{otherwise}
\end{cases}
\end{equation*}
for each symbol $f$ for an observable quantity, where $a_f$ denotes the smallest nonnegative integer in $\mathrm{ran}(\,f^\mathfrak{A})$, and where $\mathbb{N}-\{a_f\}$ denotes the complement of $\{a_f\}$.  By Definition~\ref{D:faithful}, $\mathcal{B}$ is faithful.  Hence, by condition~(\ref{P:weaker}),
\begin{equation*}
\mathrm{ran}(\,f^\mathfrak{A})\subseteq\mathrm{ran}(\,f^\mathfrak{B})
\end{equation*}
for each symbol $f$ for an observable quantity.  But if $O_f$ is empty, then
\begin{equation*} a_f\in\mathrm{ran}(\,f^\mathfrak{A})\subseteq\mathrm{ran}(\,f^\mathfrak{B})=\mathbb{N}-\{a_f\}.
\end{equation*}
This is impossible, since $a_f\notin\mathbb{N}-\{a_f\}$.  Hence, it must be the case that $O_f$ is nonempty.  It then follows that $\mathrm{ran}(\,f^\mathfrak{A})\subseteq\mathrm{ran}(\,f^\mathfrak{B})=O_f$ for each symbol $f$ for an observable quantity.  And by condition~(\ref{P:faithful}), $O_f\subseteq\mathrm{ran}(\,f^\mathfrak{A})$.  Therefore, $\mathrm{ran}(\,f^\mathfrak{A})=O_f$ for each symbol $f$ for an observable quantity.  We have shown that if conditions~(\ref{P:faithful}) and~(\ref{P:weaker}) hold, then $\mathcal{A}$ is maximally faithful.
\end{proof}

Now, the existence~\cite{PourEl1981} of a noncomputable weak solution to the wave equation, with computable initial conditions, is a well-known example of a noncomputability result in mathematics.  But it is an open question~\cite[pp.\ 330--331]{Weihrauch2002} whether this sort of noncomputability can be observed in the physical universe.  One way to formalize this question is to ask whether or not the following hypothesis is true.

\begin{cuh}
For every physical measuring operation, the set that contains exactly those nonnegative integers which are measurement results for the operation is a recursively enumerable set.
\end{cuh}

That is, the computable universe hypothesis states that for each nonnegative integer physical model $\mathcal{A}$, and for each symbol $f$ for an observable quantity in the language of $\mathcal{A}$, the set $O_f$ is recursively enumerable.\footnote{
Statements such as this are sometimes called the \emph{physical form} of the Church-Turing thesis.  See Rosen~\cite[p.\ 377]{Rosen1962}, for example.  But to avoid confusion with the Church-Turing thesis, which is a distinct hypothesis~\cite[Sect.\ 1]{Gandy1980}, we refrain from using that terminology.
}  In the context of this hypothesis, it is natural to consider nonnegative integer physical models of the following form.
\begin{definition}
A nonnegative integer physical model with a structure $\mathfrak{A}$ is said to be a \emph{computable physical model}\footnote{
In previous publications~\cite{Szudzik2012,Szudzik2013}, the definition of a computable physical model was slightly different from the definition given here.  In those previous publications, $\lvert\mathfrak{A}\rvert_S$ was required to be a recursive set (in other words, a computable set), and for each symbol $f$ for an observable quantity, $f^\mathfrak{A}$ was required to be a recursive total function (in other words, a computable function) whose domain is restricted to $\lvert\mathfrak{A}\rvert_S$.  But the two different definitions are equivalent, in the sense that if a nonnegative integer physical model is computable according to either definition, then it is isomorphic to a model that is computable according to the other definition.  See Szudzik~\cite[Thm.\ 10.5]{Szudzik2013}.
%
%
} if and only if
\begin{enumerate}
\renewcommand{\theenumi}{\roman{enumi}}
\item $\lvert\mathfrak{A}\rvert_S$ is a recursively enumerable set of nonnegative integers; and
\item for each symbol $f$ for an observable quantity, $f^\mathfrak{A}$ is a recursive partial function that has $\lvert\mathfrak{A}\rvert_S$ as its domain.
\end{enumerate}
\end{definition}
%
%
\noindent
An immediate corollary of this definition is that every nonnegative integer physical model that is a reduct of a computable physical model is itself a computable physical model.  Another corollary is that for any computable physical model with a structure $\mathfrak{A}$, and with a symbol $f$ for an observable quantity, $\mathrm{ran}(\,f^\mathfrak{A})$ is a nonempty recursively enumerable set.  Note that Models~\ref{M:baryon} and~\ref{M:cannon} are examples of computable physical models.  And Corollary~\ref{C:countable} can be adapted to computable physical models in the following manner.
\begin{corollary}
Let $\mathcal{A}$ be any computable physical model.  Then, there exists a computable physical model $\mathcal{B}$ with a structure $\mathfrak{B}$ such that $\lvert\mathfrak{B}\rvert_S=\mathbb{N}$, and such that $\mathcal{A}$ is observationally equivalent to $\mathcal{B}$.
\end{corollary}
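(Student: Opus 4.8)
The plan is to follow the proof of Corollary~\ref{C:countable}, strengthening the choice of enumerating function so that the resulting model is computable rather than merely a nonnegative integer physical model. As before, I would let $\mathcal{B}$ be a model with the same language and measuring operations as $\mathcal{A}$, and define a structure $\mathfrak{B}$ with $\lvert\mathfrak{B}\rvert_S=\mathbb{N}$, interpreting the symbols of first-order number theory in their traditional way and interpreting any remaining physical symbols by arbitrary admissible choices (such choices exist because each sort is nonempty). Since both the definition of a computable physical model and the definition of observational equivalence constrain only $\lvert\mathfrak{B}\rvert_S$ and the observable quantities $f^\mathfrak{B}$, these auxiliary interpretations are irrelevant to the conclusion.

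The heart of the argument is the definition of $f^\mathfrak{B}$ for each symbol $f$ for an observable quantity. In the proof of Corollary~\ref{C:countable}, countability of $\mathrm{ran}(\,f^\mathfrak{A})$ was enough to supply some function from $\mathbb{N}$ onto $\mathrm{ran}(\,f^\mathfrak{A})$. Here I need this function to be recursive. The key observation is that, because $\mathcal{A}$ is a computable physical model, $\mathrm{ran}(\,f^\mathfrak{A})$ is a nonempty recursively enumerable set. By the standard characterization of recursively enumerable sets, every nonempty recursively enumerable set is the range of a total recursive function. I would apply this to obtain a total recursive $g \colon \mathbb{N}\to\mathbb{N}$ with $\mathrm{ran}(g)=\mathrm{ran}(\,f^\mathfrak{A})$, and set $f^\mathfrak{B}=g$.

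It then remains to verify the two defining conditions. First, $\lvert\mathfrak{B}\rvert_S=\mathbb{N}$ is recursively enumerable, and each $f^\mathfrak{B}$ is a total recursive function, hence a recursive partial function whose domain is $\lvert\mathfrak{B}\rvert_S$; so $\mathcal{B}$ is a computable physical model. Second, since $\mathrm{ran}(\,f^\mathfrak{B})=\mathrm{ran}(g)=\mathrm{ran}(\,f^\mathfrak{A})$ for every symbol $f$ for an observable quantity, $\mathcal{A}$ is observationally equivalent to $\mathcal{B}$. The main obstacle is the single step of replacing the arbitrary enumerating function of Corollary~\ref{C:countable} with a recursive one; everything turns on the fact that the computability of $\mathcal{A}$ forces $\mathrm{ran}(\,f^\mathfrak{A})$ to be both nonempty and recursively enumerable, which is exactly the hypothesis needed to invoke the characterization of recursively enumerable sets as ranges of total recursive functions.
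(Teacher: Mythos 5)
Your proposal is correct and follows essentially the same route as the paper: both construct $\mathcal{B}$ with the same language and measuring operations, set $\lvert\mathfrak{B}\rvert_S=\mathbb{N}$, and exploit the fact that $\mathrm{ran}(\,f^\mathfrak{A})$ is a nonempty recursively enumerable set to define $f^\mathfrak{B}$ as a recursive function with domain $\mathbb{N}$ and range $\mathrm{ran}(\,f^\mathfrak{A})$. The paper cites this existence result directly (a recursive partial function with domain $\mathbb{N}$ and the given range), whereas you invoke the equivalent classical characterization of nonempty recursively enumerable sets as ranges of total recursive functions; the two formulations coincide here, so there is no substantive difference.
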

\begin{proof}
Let $\mathcal{B}$ be a nonnegative integer physical model that has the same language and measuring operations as $\mathcal{A}$, and that has a structure $\mathfrak{B}$ which is defined so that $\lvert\mathfrak{B}\rvert_S=\mathbb{N}$.  Let $\mathfrak{A}$ be the structure of $\mathcal{A}$, and consider any symbol $f$ for an observable quantity.  Because $\mathcal{A}$ is a computable physical model, $\mathrm{ran}(\,f^\mathfrak{A})$ is a nonempty recursively enumerable set.  Therefore, there exists~\cite[p.\ 82, Thm.\ 4.9]{Davis1994}
%
%
a recursive partial function with domain $\mathbb{N}$ and range $\mathrm{ran}(\,f^\mathfrak{A})$.  Define $f^\mathfrak{B}$ to be this function.  Then $\mathcal{B}$ is a computable physical model.  And because $\mathrm{ran}(\,f^\mathfrak{A})=\mathrm{ran}(\,f^\mathfrak{B})$ for each symbol $f$ for an observable quantity, $\mathcal{A}$ is observationally equivalent to $\mathcal{B}$.
\end{proof}

Alternate characterizations of the computable universe hypothesis are provided by the following theorems.
\begin{theorem}\label{T:cuh}
The computable universe hypothesis is true if and only if every maximally faithful nonnegative integer physical model is observationally equivalent to a computable physical model.
\end{theorem}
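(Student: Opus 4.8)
The plan is to prove both directions of the biconditional directly from the definitions, with the forward direction closely paralleling the construction used in the corollary that immediately precedes this theorem.

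For the forward direction, I would assume the computable universe hypothesis and let $\mathcal{A}$ be an arbitrary maximally faithful model with structure $\mathfrak{A}$. For each symbol $f$ for an observable quantity, maximal faithfulness gives $\mathrm{ran}(\,f^\mathfrak{A})=O_f$, the hypothesis guarantees that $O_f$ is recursively enumerable, and maximal faithfulness also forces $O_f$ to be nonempty. I would then mimic the preceding corollary: let $\mathcal{B}$ be the model with the same language and measuring operations as $\mathcal{A}$ whose structure $\mathfrak{B}$ has $\lvert\mathfrak{B}\rvert_S=\mathbb{N}$, and, for each $f$, set $f^\mathfrak{B}$ equal to a recursive partial function with domain $\mathbb{N}$ and range $\mathrm{ran}(\,f^\mathfrak{A})=O_f$; such a function exists precisely because $O_f$ is a nonempty recursively enumerable set. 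Then $\mathcal{B}$ is a computable physical model with $\mathrm{ran}(\,f^\mathfrak{B})=\mathrm{ran}(\,f^\mathfrak{A})$ for every $f$, so $\mathcal{A}$ is observationally equivalent to $\mathcal{B}$.

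For the reverse direction, I would assume that every maximally faithful model is observationally equivalent to a computable physical model, and I would show that $O_f$ is recursively enumerable for an arbitrary physical measuring operation $\mathrm{op}(f)$. If $O_f$ is empty, it is recursively enumerable trivially. If $O_f$ is nonempty, I would build a maximally faithful witness model $\mathcal{A}$: take a two-sorted language with the single observable-quantity symbol $f$, assign it the operation $\mathrm{op}(f)$, let $\lvert\mathfrak{A}\rvert_S=O_f$, and let $f^\mathfrak{A}$ be the inclusion of $O_f$ into $\mathbb{N}$, so that $\mathrm{ran}(\,f^\mathfrak{A})=O_f$ and the model is maximally faithful. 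By hypothesis, $\mathcal{A}$ is observationally equivalent to some computable physical model $\mathcal{B}$ with structure $\mathfrak{B}$; since the two models share the operation $\mathrm{op}(f)$, observational equivalence yields $\mathrm{ran}(\,f^\mathfrak{B})=\mathrm{ran}(\,f^\mathfrak{A})=O_f$, and $\mathrm{ran}(\,f^\mathfrak{B})$ is recursively enumerable because $\mathcal{B}$ is computable. Hence $O_f$ is recursively enumerable.

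I expect the main obstacle to be the bookkeeping around the empty case in the reverse direction: maximal faithfulness cannot tolerate an empty $O_f$, since a structure requires each observable quantity to have nonempty range, so the hypothesis cannot be invoked directly when $O_f$ is empty and one must dispatch that case separately by noting that the empty set is recursively enumerable. A secondary point requiring care is confirming that $O_f$ is genuinely an invariant of the measuring operation $\mathrm{op}(f)$ alone, independent of the surrounding structure, so that it is the same set in the witness model $\mathcal{A}$ and in its computable partner $\mathcal{B}$; this is exactly what allows observational equivalence to transfer recursive enumerability back to the operation.
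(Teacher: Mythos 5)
Your proposal is correct and follows essentially the same route as the paper: the forward direction is the paper's construction verbatim (take $\lvert\mathfrak{B}\rvert_S=\mathbb{N}$ and let $f^\mathfrak{B}$ be a recursive partial function with domain $\mathbb{N}$ and range $O_f=\mathrm{ran}(\,f^\mathfrak{A})$), and your reverse direction builds the same one-symbol witness model with $\mathrm{ran}(\,f^\mathfrak{A})=O_f$, differing only in that you argue it directly (dispatching the empty case by noting $\varnothing$ is recursively enumerable) where the paper argues by contraposition (noting a non-recursively-enumerable $O_f$ is necessarily nonempty). These are mirror images of the same observation, so there is no substantive difference in approach.
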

%
%
\begin{proof}
Suppose that the computable universe hypothesis is true, and consider any maximally faithful nonnegative integer physical model $\mathcal{A}$ with a structure $\mathfrak{A}$.  Let $\mathcal{B}$ be a nonnegative integer physical model that has the same language and measuring operations as $\mathcal{A}$.  Define the structure $\mathfrak{B}$ of $\mathcal{B}$ so that $\lvert\mathfrak{B}\rvert_S=\mathbb{N}$.  Because $\mathcal{A}$ is maximally faithful, $\mathrm{ran}(\,f^\mathfrak{A})=O_f$ for each symbol $f$ for an observable quantity.  Thus, $O_f$ is nonempty.  And because we are assuming the computable universe hypothesis, $O_f$ is recursively enumerable.  Since $O_f$ is a nonempty recursively enumerable set, there must exist a recursive partial function with domain $\mathbb{N}$ and range $O_f$.  Define $f^\mathfrak{B}$ to be this function.  Then $\mathcal{B}$ is a computable physical model that is observationally equivalent to $\mathcal{A}$ because $\mathrm{ran}(\,f^\mathfrak{A})=O_f=\mathrm{ran}(\,f^\mathfrak{B})$ for each symbol $f$ for an observable quantity.  We have shown that if the computable universe hypothesis is true, then every maximally faithful nonnegative integer physical model $\mathcal{A}$ is observationally equivalent to a computable physical model $\mathcal{B}$.

Alternatively, suppose that the computable universe hypothesis is false.  Then there exists a physical measuring operation such that the set containing exactly those nonnegative integers which are measurement results for the operation is not a recursively enumerable set.  This set is necessarily nonempty, since the empty set is recursively enumerable.  Now let $\mathcal{A}$ be a nonnegative integer physical model with a structure $\mathfrak{A}$, and with $f$ as the only symbol for an observable quantity.  Define $\mathrm{op}(f)$ to be the aforementioned measuring operation, and define $f^\mathfrak{A}$ so that $\mathrm{ran}(\,f^\mathfrak{A})=O_f$.  By definition, $\mathcal{A}$ is maximally faithful.  Next, consider any nonnegative integer physical model $\mathcal{B}$ that is observationally equivalent to $\mathcal{A}$, and let $\mathfrak{B}$ be the structure of $\mathcal{B}$.  By the definition of observational equivalence, $\mathrm{ran}(\,f^\mathfrak{B})=\mathrm{ran}(\,f^\mathfrak{A})=O_f$.  Because this is not a recursively enumerable set, $\mathcal{B}$ cannot be a computable physical model.  We have shown that if the computable universe hypothesis is false, then there is a maximally faithful nonnegative integer physical model $\mathcal{A}$ that is not observationally equivalent to any computable physical model $\mathcal{B}$.
\end{proof}

\begin{theorem}
The computable universe hypothesis is true if and only if, for each faithful nonnegative integer physical model $\mathcal{A}$, there is a faithful computable physical model that has the same language and measuring operations as  $\mathcal{A}$, and that is stronger than $\mathcal{A}$.
\end{theorem}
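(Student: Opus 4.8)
The plan is to prove both directions of the biconditional, following the template of Theorem~\ref{T:cuh} and reusing two standing facts: for a computable physical model each $\mathrm{ran}(\,f^\mathfrak{B})$ is a nonempty recursively enumerable set, and conversely every nonempty recursively enumerable set is the range of a recursive partial function with domain $\mathbb{N}$. For the forward direction, I would assume the computable universe hypothesis and let $\mathcal{A}$ be any faithful nonnegative integer physical model with structure $\mathfrak{A}$. I would build a model $\mathcal{B}$ sharing the language and measuring operations of $\mathcal{A}$, with a structure $\mathfrak{B}$ defined so that $\lvert\mathfrak{B}\rvert_S=\mathbb{N}$. For each symbol $f$ for an observable quantity, the hypothesis makes $O_f$ recursively enumerable, and faithfulness of $\mathcal{A}$ gives $O_f\subseteq\mathrm{ran}(\,f^\mathfrak{A})$. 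When $O_f$ is nonempty I would take $f^\mathfrak{B}$ to be a recursive partial function with domain $\mathbb{N}$ and range $O_f$, so that $\mathrm{ran}(\,f^\mathfrak{B})=O_f\subseteq\mathrm{ran}(\,f^\mathfrak{A})$, keeping $\mathcal{B}$ stronger than $\mathcal{A}$ in this coordinate; and since every measurement result lies in $O_f=\mathrm{ran}(\,f^\mathfrak{B})$, faithfulness is preserved by Definition~\ref{D:faithful}.

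The step I expect to require the most care is the case in which $O_f$ is empty. A structure forbids an empty range, so I cannot simply set $\mathrm{ran}(\,f^\mathfrak{B})=O_f$. Borrowing the device from the proof of Theorem~\ref{T:max-strong}, I would instead let $a_f$ be the least element of the nonempty set $\mathrm{ran}(\,f^\mathfrak{A})$ and take $f^\mathfrak{B}$ to be the constant recursive function on $\mathbb{N}$ with value $a_f$. Then $\mathrm{ran}(\,f^\mathfrak{B})=\{a_f\}\subseteq\mathrm{ran}(\,f^\mathfrak{A})$ keeps $\mathcal{B}$ stronger than $\mathcal{A}$, while faithfulness of $\mathcal{B}$ holds vacuously because $O_f$ is empty. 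In either case $f^\mathfrak{B}$ is a recursive partial function with domain $\lvert\mathfrak{B}\rvert_S=\mathbb{N}$, so $\mathcal{B}$ is a faithful computable physical model, with the same language and measuring operations as $\mathcal{A}$, that is stronger than $\mathcal{A}$, as required.

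For the converse I would argue by contraposition. Assuming the computable universe hypothesis is false, there is a physical measuring operation whose set of nonnegative integer measurement results, call it $O$, is not recursively enumerable; since the empty set is recursively enumerable, $O$ is nonempty. I would take $\mathcal{A}$ to be a model with $f$ as its only symbol for an observable quantity, with $\mathrm{op}(f)$ equal to this operation and with $f^\mathfrak{A}$ chosen so that $\mathrm{ran}(\,f^\mathfrak{A})=O_f=O$; this $\mathcal{A}$ is maximally faithful, hence faithful. Applying the hypothesized property yields a faithful computable physical model $\mathcal{B}$, with the same language and measuring operations, that is stronger than $\mathcal{A}$, so $\mathrm{ran}(\,f^\mathfrak{B})\subseteq O$. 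The decisive observation is then that faithfulness of $\mathcal{B}$ forces $O=O_f\subseteq\mathrm{ran}(\,f^\mathfrak{B})$, whence $\mathrm{ran}(\,f^\mathfrak{B})=O$; but $\mathcal{B}$ is a computable physical model, so $\mathrm{ran}(\,f^\mathfrak{B})$ is recursively enumerable, contradicting the choice of $O$. This contradiction establishes the converse, and the biconditional follows.
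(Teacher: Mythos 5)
Your proof is correct, and its two halves relate to the paper's proof differently. The forward direction is essentially the paper's own argument: the same structure $\mathfrak{B}$ with $\lvert\mathfrak{B}\rvert_S=\mathbb{N}$, the same two-case split on whether $O_f$ is empty, and the same use of the constant function with value $a_f$ (the least element of the nonempty set $\mathrm{ran}(\,f^\mathfrak{A})$) to handle the empty case, yielding $O_f\subseteq\mathrm{ran}(\,f^\mathfrak{B})\subseteq\mathrm{ran}(\,f^\mathfrak{A})$ in either case. The converse is where you genuinely diverge. The paper disposes of it in a few lines by citing earlier results: given a maximally faithful $\mathcal{A}$, the hypothesized $\mathcal{B}$ is stronger than $\mathcal{A}$ by assumption and weaker than $\mathcal{A}$ by condition~(ii) of Theorem~\ref{T:max-strong}, hence observationally equivalent to $\mathcal{A}$, and Theorem~\ref{T:cuh} then delivers the computable universe hypothesis. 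You instead argue by contraposition from scratch: from a failure of the hypothesis you build a one-symbol model $\mathcal{A}$ with $\mathrm{ran}(\,f^\mathfrak{A})=O_f=O$ not recursively enumerable, and you squeeze $\mathrm{ran}(\,f^\mathfrak{B})$ between faithfulness (which gives $O\subseteq\mathrm{ran}(\,f^\mathfrak{B})$) and strength (which gives $\mathrm{ran}(\,f^\mathfrak{B})\subseteq O$), contradicting the corollary that ranges of observable quantities in computable physical models are recursively enumerable. Your route is self-contained---it does not need Theorem~\ref{T:cuh} or the maximal-faithfulness machinery at all (your remark that $\mathcal{A}$ is maximally faithful is dispensable, since only its faithfulness is used)---and it makes visible exactly where each of the two hypotheses on $\mathcal{B}$ is needed to pin down $\mathrm{ran}(\,f^\mathfrak{B})=O$; the paper's route buys brevity and exhibits the theorem as a structural consequence of its two earlier characterizations.
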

\begin{proof}
Suppose that the computable universe hypothesis is true, and consider any faithful nonnegative integer physical model $\mathcal{A}$ with a structure $\mathfrak{A}$.  Because $\mathcal{A}$ is faithful, $O_f\subseteq\mathrm{ran}(\,f^\mathfrak{A})$ for each symbol $f$ for an observable quantity.  Now let $\mathcal{B}$ be a nonnegative integer physical model that has the same language and measuring operations as $\mathcal{A}$.  Define the structure $\mathfrak{B}$ of $\mathcal{B}$ so that $\lvert\mathfrak{B}\rvert_S=\mathbb{N}$.  For each symbol $f$ for an observable quantity, the computable universe hypothesis implies that $O_f$ is a recursively enumerable set.  There are two cases to consider.
\begin{description}
\item[Case 1] If $O_f$ is nonempty, then there exists a recursive partial function with domain $\mathbb{N}$ and range $O_f$.  Define $f^\mathfrak{B}$ to be this function.  Note that $\mathrm{ran}(\,f^\mathfrak{B})=O_f\subseteq\mathrm{ran}(\,f^\mathfrak{A})$.
\item[Case 2] If $O_f$ is the empty set then, for each $s\in\mathbb{N}$, define $f^\mathfrak{B}(s)=a_f$, where $a_f$ denotes the smallest nonnegative integer in $\mathrm{ran}(\,f^\mathfrak{A})$.  Note that $\mathrm{ran}(\,f^\mathfrak{B})=\{a_f\}\subseteq\mathrm{ran}(\,f^\mathfrak{A})$.
\end{description}
In either case, $f^\mathfrak{B}$ is a recursive partial function with domain $\mathbb{N}$, and $O_f\subseteq\mathrm{ran}(\,f^\mathfrak{B})\linebreak[0]\subseteq\mathrm{ran}(\,f^\mathfrak{A})$.
%
%
By definition, $\mathcal{B}$ is a faithful computable physical model that is stronger than $\mathcal{A}$.

Conversely, suppose that for each faithful nonnegative integer physical model $\mathcal{A}$, there is a faithful computable physical model that has the same language and measuring operations as $\mathcal{A}$, and that is stronger than $\mathcal{A}$.  Then for each maximally faithful nonnegative integer physical model $\mathcal{A}$, there is a faithful computable physical model $\mathcal{B}$ that has the same language and measuring operations as  $\mathcal{A}$, and that is stronger than $\mathcal{A}$.  But by  condition~(\ref{P:weaker}) of Theorem~\ref{T:max-strong}, $\mathcal{B}$ is also weaker than $\mathcal{A}$.  Hence, $\mathcal{B}$ is observationally equivalent to $\mathcal{A}$.  We have shown that every maximally faithful nonnegative integer physical model $\mathcal{A}$ is observationally equivalent to a computable physical model $\mathcal{B}$.  Therefore, by Theorem~\ref{T:cuh}, the computable universe hypothesis is true.
\end{proof}

Now consider any two-sorted nonnegative integer physical language that has symbols for observable quantities, a predicate symbol for equality of sort $\langle S,S\rangle$, a function symbol $\mathrm{conv}_{S\to N}$ of sort $\langle S,N\rangle$, and no additional physical symbols.  Let $\mathfrak{A}$ be the structure of a computable physical model that has this language.  Letting $\mathrm{conv}_{S\to N}^\mathfrak{A}(n)=n$ for each nonnegative integer $n\in\lvert\mathfrak{A}\rvert_S$, $\mathfrak{A}$ satisfies
%
%
\begin{equation}\label{E:one-to-one}
\forall_S\,s\;\forall_S\,t\,\bigl(\,\mathrm{conv}_{S\to N}(s)=\mathrm{conv}_{S\to N}(t)\;\rightarrow\;s=t\,\bigr)
\end{equation}
and
\begin{equation}\label{E:domain}
\forall_N\,x\,\Bigl(\,\phi(x)\;\leftrightarrow\;\exists_S\,s\,\bigl(\,\mathrm{conv}_{S\to N}(s)=x\,\bigr)\,\Bigr),
\end{equation}
where $\phi(x)$ is a formula that defines the set $\lvert\mathfrak{A}\rvert_S$ within the standard model $\mathfrak{N}$ of first-order number theory.  And for each symbol $f$ for an observable quantity, $\mathfrak{A}$ satisfies
\begin{equation}\label{E:function}
\forall_N\,x\,\forall_N\,y\,\Bigl(\,\psi_f(x,y)\;\leftrightarrow\;\exists_S\,s\,\bigl(\,\mathrm{conv}_{S\to N}(s)=x\;\wedge\;f(s)=y\,\bigr)\,\Bigr),
\end{equation}
where $\psi_f(x,y)$ is a formula that defines $f^\mathfrak{A}$ as a relation in $\mathfrak{N}$.  Taken together, we can regard formulas~\eqref{E:one-to-one} and~\eqref{E:domain}, together with a formula of the form~\eqref{E:function} for each symbol $f$ for an observable quantity, as axioms that extend first-order number theory.
%
%
Note that every $\omega$-model\footnote{
An $\omega$-model of the axioms is any structure that satisfies the axioms and that assigns the nonlogical symbols of first-order number theory their traditionally intended meanings.
%
%
%
%
See Enderton~\cite[p.\ 304]{Enderton2001} and Barwise~\cite[p.\ 42]{Barwise1977}.
%
%
} of these axioms is isomorphic to the structure $\mathfrak{A}$ of the computable physical model.

\section{Definitional Expansions}\label{S:definitional}

Many axiom systems have languages with a small number of symbols.  The axioms of set theory, for example, are often written in a language where $=$ and $\in$ are the only predicate symbols.  A structure $\mathfrak{A}$ for set theory is a structure for this language.  Other predicate symbols that are commonly used by set theorists, such as the subset symbol ($\subseteq$) and proper subset symbol ($\subsetneq$), are usually defined in terms of $=$ and $\in$.  But the structure $\mathfrak{A}$ can be expanded to incorporate these defined symbols into its language.  This expanded structure is known as a \emph{definitional expansion}\footnote{
Definitional expansions are described in greater detail by Hodges~\cite[pp.\ 59--60]{Hodges1993}.
} of $\mathfrak{A}$.

Defined symbols for observable quantities can also be introduced to the language of a nonnegative integer physical model.  In some cases, there is a natural way to assign measuring operations to these symbols.  For example, let $\mathcal{A}$ be a nonnegative integer physical model with a structure $\mathfrak{A}$, and with a symbol $f$ for an observable quantity.   An expansion $\mathcal{A}^\prime$, with a structure $\mathfrak{A}^\prime$, can be constructed by introducing a new symbol $g$ for an observable quantity.  Define
\begin{equation*}
g^{\mathfrak{A}^\prime}=h\circ f^\mathfrak{A},
\end{equation*}
where $h$ is a recursive partial function whose domain includes $\mathrm{ran}(\,f^\mathfrak{A})$.  We say that this observable quantity is \emph{derived} from $f^\mathfrak{A}$.  Note, in this case, that $\mathfrak{A}^\prime$ is a definitional expansion of $\mathfrak{A}$.

But because $h$ is a recursive partial function, there is an effective procedure for calculating $h$.  We can then define $\mathrm{op}(g)$ to be the following two-step measuring operation:
\begin{description}
\item[Step 1] Perform the operation $\mathrm{op}(f)$ to obtain a measurement result $m$.
\item[Step 2] Apply the procedure for $h$ to calculate $h(m)$.  This is the measurement result for $\mathrm{op}(g)$.
\end{description}
We say that this is a \emph{natural measuring operation} for the derived observable quantity.  But note that if model $\mathcal{A}$ is not faithful, then a measurement result of $\mathrm{op}(f)$ might be outside the range of $f^\mathfrak{A}$, and the effective procedure in Step~2 might not produce any result in a finite number of steps.  If Step~2 does not produce a result in a finite number of steps, then $\mathrm{op}(g)$ fails to produce a measurement result.\footnote{
Of course, an individual performing this operation might never know that it fails.  See Davis~\cite[p.\ 10]{Davis1958}, for example.
}

\begin{theorem}\label{T:derived}
Let $\mathcal{A}$ be any nonnegative integer physical model, and let $\mathcal{A}^\prime$ be an expansion of $\mathcal{A}$ that is obtained by introducing a derived observable quantity with a natural measuring operation.  Then, $\mathcal{A}^\prime$ is faithful if and only if $\mathcal{A}$ is faithful.
\end{theorem}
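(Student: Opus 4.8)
The plan is to prove the two directions of the biconditional separately, with essentially all of the new content residing in the implication ``$\mathcal{A}$ faithful $\Rightarrow$ $\mathcal{A}^\prime$ faithful,'' while the converse follows at once from an earlier corollary. First I would fix notation: let $\mathfrak{A}$ and $\mathfrak{A}^\prime$ be the structures of $\mathcal{A}$ and $\mathcal{A}^\prime$, let $f$ be the symbol for the observable quantity from which the new symbol $g$ is derived, so that $g^{\mathfrak{A}^\prime}=h\circ f^\mathfrak{A}$ for a recursive partial function $h$ whose domain includes $\mathrm{ran}(\,f^\mathfrak{A})$, and recall that $\mathrm{op}(g)$ is the two-step natural measuring operation. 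Since $\mathcal{A}^\prime$ is an expansion of $\mathcal{A}$, the two models share the same states, $\lvert\mathfrak{A}\rvert_S=\lvert\mathfrak{A}^\prime\rvert_S$, and every symbol for an observable quantity in the language of $\mathcal{A}$ retains both its interpretation and its measuring operation in $\mathcal{A}^\prime$. Consequently the faithfulness condition of Definition~\ref{D:faithful} for any such inherited symbol is literally the same statement in both models.

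For the converse direction, I would simply observe that $\mathcal{A}$ is a reduct of $\mathcal{A}^\prime$, so that if $\mathcal{A}^\prime$ is faithful then $\mathcal{A}$ is faithful by Corollary~\ref{C:reduct}. This disposes of one half of the biconditional immediately, with no further work.

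For the main direction, I would assume that $\mathcal{A}$ is faithful. By the observation above, the faithfulness condition already holds in $\mathcal{A}^\prime$ for every symbol inherited from $\mathcal{A}$, so the only symbol requiring attention is the new symbol $g$. I would then verify Definition~\ref{D:faithful} for $g$ directly: suppose a measurement result of $\mathrm{op}(g)$ is equal to some nonnegative integer $n$. By the description of the natural measuring operation, such a result can be obtained only when Step~1 returns some measurement result $m$ of $\mathrm{op}(f)$ and Step~2 computes $h(m)=n$. Since $m$ is an actual measurement result of $\mathrm{op}(f)$ and $\mathcal{A}$ is faithful, there exists a state $s\in\lvert\mathfrak{A}\rvert_S$ with $f^\mathfrak{A}(s)=m$; in particular $m\in\mathrm{ran}(\,f^\mathfrak{A})$, so $h(m)$ is defined because the domain of $h$ includes $\mathrm{ran}(\,f^\mathfrak{A})$. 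Then $g^{\mathfrak{A}^\prime}(s)=h\bigl(f^\mathfrak{A}(s)\bigr)=h(m)=n$, which furnishes the required witnessing state and completes the direction.

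The step I expect to be the crux, though it is conceptual rather than computational, is the correct reading of what a measurement result of $\mathrm{op}(g)$ entails: one must extract from the definition of the natural measuring operation that any completed result $n$ of $\mathrm{op}(g)$ certifies the existence of a completed result $m$ of $\mathrm{op}(f)$ satisfying $h(m)=n$. This is precisely the hook that allows the faithfulness of $\mathcal{A}$ to supply the witnessing state $s$. Once that link is made explicit, the remaining computation $g^{\mathfrak{A}^\prime}(s)=h(m)=n$ is routine, and no property of $h$ beyond its being defined on $\mathrm{ran}(\,f^\mathfrak{A})$ is actually needed.
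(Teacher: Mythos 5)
Your proposal is correct and takes essentially the same route as the paper's own proof: the backward direction via Corollary~\ref{C:reduct} applied to the reduct $\mathcal{A}$ of $\mathcal{A}^\prime$, and the forward direction by verifying Definition~\ref{D:faithful} symbol by symbol, where your observation that inherited symbols satisfy the same condition in both models is the paper's Case~1, and your treatment of $g$ is the paper's Case~2. In particular, you correctly identify the one substantive step, namely that a completed result $n$ of $\mathrm{op}(g)$ certifies a measurement result $m$ of $\mathrm{op}(f)$ with $h(m)=n$, after which faithfulness of $\mathcal{A}$ yields a state $s$ with $g^{\mathfrak{A}^\prime}(s)=h\bigl(f^\mathfrak{A}(s)\bigr)=n$, exactly as in the paper.
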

\begin{proof}
Suppose that $\mathcal{A}^\prime$ is faithful.  Because $\mathcal{A}$ is a reduct of $\mathcal{A}^\prime$, it follows from Corollary~\ref{C:reduct} that $\mathcal{A}$ is faithful.  Conversely, suppose that $\mathcal{A}$ is faithful.  Let $\mathfrak{A}$ and $\mathfrak{A}^\prime$ be the structures of $\mathcal{A}$ and $\mathcal{A}^\prime$, respectively.  Consider any symbol $g$ for an observable quantity of $\mathcal{A}^\prime$, consider any nonnegative integer $n$, and suppose that a measurement result of $\mathrm{op}(g)$ is equal to $n$.  There are two cases to consider.
\begin{description}
\item[Case 1] If $g$ is a symbol for an observable quantity of $\mathcal{A}$, then because $\mathcal{A}$ is faithful, there exists an $s\in\lvert\mathfrak{A}\rvert_S=\lvert\mathfrak{A}^\prime\rvert_S$ such that $n=g^\mathfrak{A}(s)=g^{\mathfrak{A}^\prime}(s)$.
\item[Case 2] If $g$ is the symbol for the derived observable quantity that was introduced to $\mathcal{A}^\prime$, then  $g^{\mathfrak{A}^\prime}=h\circ f^\mathfrak{A}$, where $f$ is a symbol for an observable quantity of $\mathcal{A}$, and where $h$ is a recursive partial function whose domain includes $\mathrm{ran}(\,f^\mathfrak{A})$.  By the definition of $\mathrm{op}(g)$, there is a measurement result $m$ of $\mathrm{op}(f)$ such that $h(m)=n$.  And because $\mathcal{A}$ is faithful, there exists an $s\in\lvert\mathfrak{A}\rvert_S=\lvert\mathfrak{A}^\prime\rvert_S$ such that $m=f^\mathfrak{A}(s)$.  Thus, there is an $s\in\lvert\mathfrak{A}^\prime\rvert_S$ such that
\begin{equation*}
n=h(m)=h\circ f^\mathfrak{A}(s)=g^{\mathfrak{A}^\prime}(s).
\end{equation*}
\end{description}
In either case, there exists an $s\in\lvert\mathfrak{A}^\prime\rvert_S$ such that $n=g^{\mathfrak{A}^\prime}(s)$.  By Definition~\ref{D:faithful}, $\mathcal{A}^\prime$ is faithful.
\end{proof}

As another application of definitional expansions, consider a structure $\mathfrak{A}$ for a nonnegative integer physical model $\mathcal{A}$.  Let $\mathfrak{A}^\prime$ be the definitional expansion of $\mathfrak{A}$ that is obtained by introducing, for each symbol $f$ for an observable quantity, a new predicate symbol $P_f$ of sort $N$ that has the following definition:
\begin{equation}\label{E:observational}
\forall_N\,x\,\Bigl(\,P_f(x)\;\leftrightarrow\;\exists_S\,s\,\bigl(\,f(s)=x\,\bigr)\,\Bigr).
\end{equation}
We say that these are the \emph{observational predicate symbols} introduced into the language.  Now let the \emph{observational structure} of $\mathcal{A}$ be the reduct of $\mathfrak{A}^\prime$ whose only sort is the sort $N$, and whose only symbols are the observational predicate symbols, together with the symbols of first-order number theory.  We have the following immediate corollary.
\begin{corollary}
Let $\mathcal{A}$ and $\mathcal{B}$ be any nonnegative integer physical models that have the same language and measuring operations.  Then, $\mathcal{A}$ and $\mathcal{B}$ are observationally equivalent if and only if they have the same observational structures.
\end{corollary}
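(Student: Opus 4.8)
The plan is to unpack the definition of the observational structure and to observe that, once the components that are forced to coincide have been stripped away, equality of observational structures asserts exactly what observational equivalence asserts. First I would note that because $\mathcal{A}$ and $\mathcal{B}$ have the same language, they have the same symbols $f$ for observable quantities, and hence the same observational predicate symbols $P_f$; consequently their observational structures are structures for one and the same language. Moreover, because both $\mathcal{A}$ and $\mathcal{B}$ are nonnegative integer physical models, the sort $N$ is interpreted as $\mathbb{N}$ and the nonlogical symbols of first-order number theory receive their standard meanings in both observational structures. Thus the two observational structures share the same universe and already agree on every symbol of first-order number theory, so the only interpretations that can possibly differ are those of the symbols $P_f$.

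The key computation is then to evaluate each $P_f$ in the two observational structures. By the defining formula~\eqref{E:observational}, a nonnegative integer $x$ satisfies $P_f$ in the observational structure of $\mathcal{A}$ if and only if $x=f^\mathfrak{A}(s)$ for some $s\in\lvert\mathfrak{A}\rvert_S$; that is, the interpretation of $P_f$ is precisely the set $\mathrm{ran}(\,f^\mathfrak{A})$. The same reasoning, applied to $\mathcal{B}$, shows that the interpretation of $P_f$ in the observational structure of $\mathcal{B}$ is $\mathrm{ran}(\,f^\mathfrak{B})$.

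Combining these observations finishes the argument. Two structures for the same language with the same universe are equal if and only if they assign the same interpretation to every symbol. Since the observational structures of $\mathcal{A}$ and $\mathcal{B}$ already agree on the number-theoretic symbols and the universe, they coincide if and only if they agree on each $P_f$, which by the previous paragraph means $\mathrm{ran}(\,f^\mathfrak{A})=\mathrm{ran}(\,f^\mathfrak{B})$ for every symbol $f$ for an observable quantity. By the definition of observational equivalence, this last condition holds exactly when $\mathcal{A}$ and $\mathcal{B}$ are observationally equivalent.

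I do not expect a genuine obstacle here, since the statement is essentially a matter of reading off the definitions, and indeed the author has flagged it as an immediate corollary. The only point requiring care is the bookkeeping in the first paragraph: one must confirm that the observational structures are structures for a single common language, so that asking whether they are ``the same'' is meaningful, and that every component other than the $P_f$ is forced to agree, so that the entire content of the equality collapses onto the equality of the ranges.
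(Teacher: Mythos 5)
Your proposal is correct and takes essentially the same route as the paper's proof: both arguments hinge on reading off from formula~\eqref{E:observational} that the interpretation of each observational predicate symbol $P_f$ is exactly $\mathrm{ran}(\,f^\mathfrak{A})$, and then identifying equality of observational structures with equality of these ranges, which is the definition of observational equivalence. Your extra bookkeeping in the first paragraph (that the two observational structures share a common language, the universe $\mathbb{N}$, and the standard number-theoretic interpretations, so only the $P_f$ can differ) is left implicit in the paper's shorter argument but is a legitimate and harmless elaboration, not a different approach.
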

\begin{proof}
Let $\mathfrak{A}$ and $\mathfrak{B}$ be the structures of $\mathcal{A}$ and $\mathcal{B}$, respectively.  By formula~\eqref{E:observational}, $P_f^{\mathfrak{A}^\prime}=\mathrm{ran}(\,f^\mathfrak{A})$ for each symbol $f$ for an observable quantity, and similarly for $\mathfrak{B}$.  Therefore, $\mathcal{A}$ is observationally equivalent to $\mathcal{B}$ if and only if, for each symbol $f$ for an observable quantity, $P_f^{\mathfrak{A}^\prime}=P_f^{\mathfrak{B}^\prime}$.  That is, $\mathcal{A}$ is observationally equivalent to $\mathcal{B}$ if and only if $\mathcal{A}$ and $\mathcal{B}$ have the same observational structures.
\end{proof}

\section{Restrictions of Models}\label{S:restrictions}

Recall that Model~\ref{M:baryon} is a nonnegative integer physical model with a structure $\mathfrak{A}$ such that $\lvert\mathfrak{A}\rvert_S=\mathbb{N}$, with a symbol $f$ for an observable quantity such that $f^\mathfrak{A}(s)=2s+2$, and with a measuring operation $\mathrm{op}(f)$ that counts the total number of baryons and antibaryons produced in a collision of two protons.  Assuming the law of baryon number conservation, Model~\ref{M:baryon} is faithful.

Now consider a submodel of Model~\ref{M:baryon}.  In particular, let $\mathcal{B}$ be the submodel of Model~\ref{M:baryon} with $\lvert\mathfrak{B}\rvert_S=\mathbb{N}-\{0\}$, where $\mathfrak{B}$ denotes the structure of $\mathcal{B}$.  Although $\mathrm{ran}(\,f^\mathfrak{A})$ contains all even positive integers, note that $\mathrm{ran}(\,f^\mathfrak{B})$ only contains those even positive integers that are \emph{greater than two}.  By the definition of a submodel (Definition~\ref{D:isomorphic}), both models have the same measuring operation.  But because collisions of protons have been observed\footnote{
For example, see Batson and Riddiford~\cite{Batson1956}.
} where the total number of baryons and antibaryons produced is equal to $2$, model $\mathcal{B}$ is not faithful.  Hence, a faithful model such as Model~\ref{M:baryon} may have a submodel that is not faithful.

Now consider a nonnegative integer physical model $\mathcal{C}$ that is identical to $\mathcal{B}$, except that $\mathrm{op}(f)$ is only intended to be performed when the total number of baryons and antibaryons produced is greater than two, and $\mathrm{op}(f)$ fails to produce a measurement result if this is not the case.  In contrast to model $\mathcal{B}$, model $\mathcal{C}$ is faithful.  We say that model $\mathcal{C}$ is a \emph{restriction} of Model~\ref{M:baryon} to the positive integers greater than two.  The concept of a restriction of a nonnegative integer physical model can be formalized in the following manner.  In this definition, we use $\mathrm{op}_\mathcal{A}(f)$ to denote the measuring operation assigned to $f$ in a model $\mathcal{A}$, and $\mathrm{op}_\mathcal{B}(f)$ to denote the measuring operation assigned to $f$ in a model $\mathcal{B}$.  Note that for each recursively enumerable set $Q$, there is~\cite[p.\ 63]{Enderton2001} a semidecision procedure for testing membership in $Q$.
\begin{definition}\label{D:restriction}
Let $\mathcal{A}$ and $\mathcal{B}$ be nonnegative integer physical models that have the same language, and that have $f$ as the only symbol for an observable quantity.  Let $\mathfrak{A}$ and $\mathfrak{B}$ be the structures of $\mathcal{A}$ and $\mathcal{B}$, respectively.  And let $Q$ be a recursively enumerable set of nonnegative integers such that $\mathrm{ran}(\,f^\mathfrak{A})\cap Q\ne\varnothing$.  Then, we say that $\mathcal{B}$ is a \emph{restriction} of $\mathcal{A}$ to the set $Q$ if and only if
\begin{enumerate}
\renewcommand{\theenumi}{\roman{enumi}}
\item\label{P:rest-structure} $\mathfrak{B}$ is the substructure of $\mathfrak{A}$  such that $\lvert\mathfrak{B}\rvert_S=\bigl\{s\in\lvert\mathfrak{A}\rvert_S\bigm\vert f^\mathfrak{A}(s)\in Q\bigr\}$; and
\item\label{P:rest-operation} $\mathrm{op}_\mathcal{B}(f)$ is the following two-step measuring operation:
\begin{description}
\item[Step 1] Perform the operation $\mathrm{op}_\mathcal{A}(f)$ to obtain a measurement result $n$.
\item[Step 2] Use the semidecision procedure for $Q$ to test whether $n\in Q$.  If this procedure verifies that $n\in Q$, then let $n$ be the measurement result for $\mathrm{op}_\mathcal{B}(f)$.  Otherwise, $\mathrm{op}_\mathcal{B}(f)$ fails to produce a measurement result.
\end{description}
\end{enumerate}
\end{definition}

One corollary of this definition is that if $\mathcal{A}$ is a computable physical model, then any restriction of $\mathcal{A}$ to a recursively enumerable set is also a computable physical model.  We also have the following corollary.
\begin{corollary}\label{C:restriction}
Let $\mathcal{A}$ be a nonnegative integer physical model with a structure $\mathfrak{A}$, and with $f$ as the only symbol for an observable quantity.  Let $Q$ be a recursively enumerable set of nonnegative integers such that $\mathrm{ran}(\,f^\mathfrak{A})\cap Q\ne\varnothing$, and let $\mathcal{B}$ be a restriction of $\mathcal{A}$ to the set $Q$.  If $\mathcal{A}$ is faithful, then $\mathcal{B}$ is faithful.
\end{corollary}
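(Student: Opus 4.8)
The plan is to argue directly from Definition~\ref{D:faithful}, mirroring the structure of the proofs of Corollaries~\ref{C:reduct} and~\ref{C:extension}. I would let $\mathfrak{A}$ and $\mathfrak{B}$ be the structures of $\mathcal{A}$ and $\mathcal{B}$, assume that $\mathcal{A}$ is faithful, take an arbitrary nonnegative integer $n$, and suppose that a measurement result of $\mathrm{op}_\mathcal{B}(f)$ is equal to $n$. The goal is then to produce a state $s\in\lvert\mathfrak{B}\rvert_S$ with $f^\mathfrak{B}(s)=n$, which by Definition~\ref{D:faithful} establishes faithfulness of $\mathcal{B}$.

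The key step is to unwind the two-step measuring operation of Definition~\ref{D:restriction}. Because $n$ is a measurement result of $\mathrm{op}_\mathcal{B}(f)$, Step~1 must have produced some measurement result of $\mathrm{op}_\mathcal{A}(f)$, and Step~2 must have verified that this result lies in $Q$ before returning it as the output. Since the output is $n$, the result obtained in Step~1 is exactly $n$, so I may conclude both that $n$ is a genuine measurement result of $\mathrm{op}_\mathcal{A}(f)$ and that $n\in Q$.

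I would then apply the faithfulness of $\mathcal{A}$: since $n$ is a measurement result of $\mathrm{op}_\mathcal{A}(f)$, there exists $s\in\lvert\mathfrak{A}\rvert_S$ with $f^\mathfrak{A}(s)=n$. Because $n\in Q$, this $s$ satisfies $f^\mathfrak{A}(s)\in Q$, so by condition~(\ref{P:rest-structure}) of Definition~\ref{D:restriction} we have $s\in\lvert\mathfrak{B}\rvert_S$. As $\mathfrak{B}$ is the substructure of $\mathfrak{A}$ on this smaller domain, $f^\mathfrak{B}$ agrees with $f^\mathfrak{A}$ there, so $f^\mathfrak{B}(s)=f^\mathfrak{A}(s)=n$, as required.

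The only real obstacle is that first step: correctly reading off from the two-step operation that an output $n$ of $\mathrm{op}_\mathcal{B}(f)$ simultaneously witnesses that $n$ is a measurement result of $\mathrm{op}_\mathcal{A}(f)$ and that $n\in Q$. Everything after that is a routine application of faithfulness together with the definition of the restricted domain. Note that the hypothesis $\mathrm{ran}(\,f^\mathfrak{A})\cap Q\ne\varnothing$ plays no role in this argument beyond guaranteeing, via Definition~\ref{D:restriction}, that $\lvert\mathfrak{B}\rvert_S$ is nonempty and hence that $\mathcal{B}$ is a well-defined model.
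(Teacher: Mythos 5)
Your proposal is correct and takes essentially the same approach as the paper's proof: you unwind part~(\ref{P:rest-operation}) of Definition~\ref{D:restriction} to conclude that $n\in Q$ and that $n$ is a measurement result of $\mathrm{op}_\mathcal{A}(f)$, then use faithfulness of $\mathcal{A}$ together with part~(\ref{P:rest-structure}) to place the witnessing state $s$ in $\lvert\mathfrak{B}\rvert_S$ with $f^\mathfrak{B}(s)=f^\mathfrak{A}(s)=n$. Your closing observation that the hypothesis $\mathrm{ran}(\,f^\mathfrak{A})\cap Q\ne\varnothing$ serves only to make $\lvert\mathfrak{B}\rvert_S$ nonempty (so that $\mathfrak{B}$ is a legitimate structure) is also accurate.
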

%
%
%
\begin{proof}
Let $\mathfrak{B}$ be the structure of $\mathcal{B}$.  Now suppose that $\mathcal{A}$ is faithful, consider any nonnegative integer $n$, and suppose that a measurement result of $\mathrm{op}_\mathcal{B}(f)$ is equal to $n$.  By part~(\ref{P:rest-operation}) of Definition~\ref{D:restriction}, it must be the case that $n\in Q$, and that $n$ is a measurement result for $\mathrm{op}_\mathcal{A}(f)$.  Thus, because $\mathcal{A}$ is faithful, there exists an $s\in\lvert\mathfrak{A}\rvert_S$ such that $f^\mathfrak{A}(s)=n$.  But then, by part~(\ref{P:rest-structure}) of Definition~\ref{D:restriction}, $s\in\lvert\mathfrak{B}\rvert_S$ and $f^\mathfrak{B}(s)=f^\mathfrak{A}(s)=n$.  Therefore, by Definition~\ref{D:faithful}, $\mathcal{B}$ is faithful.
\end{proof}

\section{An Example}\label{S:example}

At this point, an example that illustrates the use of some of the corollaries and theorems might be instructive.  Recall that Model~\ref{M:cannon} describes the trajectory of a projectile fired from a cannon at $5$ meters per second in an inertial reference frame, and in the absence of any external forces.  The model has a structure $\mathfrak{A}$ such that $\lvert\mathfrak{A}\rvert_S$ is the set of all $t\in\mathbb{N}$, the model has a symbol $f$ for an observable quantity such that $f^\mathfrak{A}(t)=J(t,5t)$, and the model has an operation $\mathrm{op}(f)$ that jointly measures the number of seconds $s$ since the projectile was fired, together with the number of meters $m$ between the cannon and the projectile at that time.

Let $K\colon\mathbb{N}\to\mathbb{N}$ and $L\colon\mathbb{N}\to\mathbb{N}$ be the recursive functions~\cite[p.\ 278]{Enderton2001}
%
%
that satisfy the equations
\begin{equation*}
K\bigl(J(a,b)\bigr)=a\quad\text{and}\quad L\bigl(J(a,b)\bigr)=b
\end{equation*}
for all nonnegative integers $a$ and $b$.  Now consider the following sequence of constructions.

For each nonnegative integer $u$, define $\mathcal{B}_u$ to be the restriction of Model~\ref{M:cannon} to the set $\{\,J(u,b)\mid b\in\mathbb{N}\,\}$.  Then $\lvert\mathfrak{B}_u\rvert_S=\{u\}$ and $f^{\mathfrak{B}_u}(u)=J(u,5u)$, where  $\mathfrak{B}_u$ denotes the structure of $\mathcal{B}_u$.  The measuring operation $\mathrm{op}_{\mathcal{B}_u}\!(f)$ produces results of the form $J(u,m)$, where $m$ is the number of meters between the cannon and the projectile, measured $u$ many seconds after the projectile was fired.  By Corollary~\ref{C:restriction}, $\mathcal{B}_u$ is faithful if Model~\ref{M:cannon} is faithful.\footnote{
But the converse of this statement does not necessarily hold.  For example, it might be the case that Model~\ref{M:cannon} is not faithful, but that $\mathcal{B}_u$ is faithful because there are insufficient resources in the universe to perform Step~2 in Definition~\ref{D:restriction}.
}

For each nonnegative integer $u$, define $\mathcal{C}_u$ to be the expansion of $\mathcal{B}_u$ that is obtained by introducing the derived observable quantity
\begin{equation*}
g_u^{\mathfrak{C}_u}=L\circ f^{\mathfrak{B}_u}
\end{equation*}
with a natural measuring operation, where $\mathfrak{C}_u$ denotes the structure of $\mathcal{C}_u$.  Note that $\lvert\mathfrak{C}_u\rvert_S=\{u\}$, $f^{\mathfrak{C}_u}(u)=J(u,5u)$, and $g_u^{\mathfrak{C}_u}(u)=5u$.  The measuring operation $\mathrm{op}(g_u)$ produces the number of meters between the cannon and the projectile, measured $u$ many seconds after the projectile was fired.  By Theorem~\ref{T:derived}, $\mathcal{C}_u$ is faithful if and only if $\mathcal{B}_u$ is faithful.

For each nonnegative integer $u$, let $\mathcal{D}_u$ be the reduct of $\mathcal{C}_u$ that has $g_u$ as its only symbol for an observable quantity.  By Corollary~\ref{C:reduct}, $\mathcal{D}_u$ is faithful if $\mathcal{C}_u$ is faithful.
%
%

For each nonnegative integer $u$, let $\mathcal{E}_u$ be a nonnegative integer physical model that has the same language and measuring operation as $\mathcal{D}_u$, and define its structure $\mathfrak{E}_u$ so that $\lvert\mathfrak{E}_u\rvert_S=\{0\}$ and $g_u^{\mathfrak{E}_u}(0)=5u$.  Then $\mathcal{E}_u$ is isomorphic to $\mathcal{D}_u$.  Therefore, $\mathcal{E}_u$ is faithful if and only if $\mathcal{D}_u$ is faithful.

And finally, let $\mathcal{F}$ be a nonnegative integer physical model that has $\{\,g_u\mid u\in\mathbb{N}\,\}$ as the set of symbols for its observable quantities, and that has no additional physical symbols.  Define $\mathcal{F}$ so that, for each nonnegative integer $u$, $\mathcal{E}_u$ is a reduct of $\mathcal{F}$.  Then $\lvert\mathfrak{F}\rvert_S=\{0\}$, where  $\mathfrak{F}$ denotes the structure of $\mathcal{F}$.  And for each nonnegative integer $u$, $g_u^\mathfrak{F}(0)=5u$ and $\mathrm{op}(g_u)$ produces the number of meters between the cannon and the projectile, measured $u$ many seconds after the projectile was fired.  By Corollary~\ref{C:one-symbol}, $\mathcal{F}$ is faithful if and only if, for each $u\in\mathbb{N}$, $\mathcal{E}_u$ is faithful.

In the context of Einstein's theory of relativity, the trajectory of a projectile is often thought of as a static line that exists in a single state in space-time.  The projectile's position at a particular time $u$ is then thought of as a property of the line that is measured by sampling a single point along the length of the line.  De Broglie~\cite[p.\ 114]{DeBroglie1949} described this conception as follows:
\begin{quote}
In space-time, everything which for each of us constitutes the past, the present, and the future is given in block, and the entire collection of events, successive for us, which form the existence of a material particle is represented by a line, the world-line of the particle. \ldots\  Each observer, as his time passes, discovers, so to speak, new slices of space-time which appear to him as successive aspects of the material world, though in reality the ensemble of events constituting space-time exist prior to his knowledge of them.
\end{quote}
\noindent
This static conception of the trajectory of the projectile in Model~\ref{M:cannon} is formalized by model $\mathcal{F}$ in the sense that model $\mathcal{F}$ has a single state, and there is a separate observable quantity for the position of the projectile at each time $u$.  Moreover, through the chain of implications in the preceding paragraphs, we have shown that if Model~\ref{M:cannon} is faithful, then $\mathcal{F}$ is faithful.

\section{Real Numbers}\label{S:real}

A commonly encountered form of measurement~\cite[Sect.\ 4.3.7]{JCGM2008} measures lower and upper bounds for a value,\footnote{
For any real numbers $b$ and $x$, we say that $b$ is a \emph{lower bound} for $x$ if and only if $b\leq x$, and we say that $b$ is a \emph{strict lower bound} for $x$ if and only if $b<x$.  Of course, every strict lower bound is also a lower bound, and similarly for upper bounds.
} rather than measuring the value directly.  For example, Perrin~\cite[Sect.\ 11]{Perrin1910} determined that the number of molecules in $2$ grams of molecular hydrogen gas\footnote{
Perrin~\cite[Sect.\ 6]{Perrin1910} defined Avogadro's constant to be  equal to this number, but a different definition~\cite[p.\ 134]{CCU2019} for Avogadro's constant is used nowadays.
} is strictly greater than $45\times10^{22}$, and strictly less than $200\times10^{22}$.  Hence, in units of septillions of molecules (that is, $10^{24}$ molecules), he determined that there are between $0.45$ septillion and $2.0$ septillion molecules in $2$ grams of molecular hydrogen gas.  Conventionally~\cite[pp.\ 13--16]{Taylor1997}, we express this by stating that Perrin measured the value to be equal to $1.2\pm0.8$ septillion molecules.  Conventions also require the lower and upper bounds to have only finitely many digits in their decimal expansions.  As a consequence, the bounds are rational numbers.

Now, an integer $i$ can be encoded as a nonnegative integer $\mathrm{int}(i)$ using the function
\begin{equation*}
\mathrm{int}(i)=\begin{cases}
2i &\text{if $i\geq0$}\\
-2i-1 &\text{otherwise}
\end{cases}.
\end{equation*}
And any rational number $\tfrac{a}{b}$ that is in lowest terms with $b>0$ can be encoded as a nonnegative integer $\mathrm{rat}\bigl(\tfrac{a}{b}\bigr)$ using
\begin{align*}
\mathrm{rat}\Bigl(\frac{a}{b}\Bigr)&=\\
&\mathrm{int}\Bigl((\mathrm{sgn}\:a)2^{\mathrm{int}(\alpha_1-\beta_1)} 3^{\mathrm{int}(\alpha_2-\beta_2)} 5^{\mathrm{int}(\alpha_3-\beta_3)} 7^{\mathrm{int}(\alpha_4-\beta_4)} 11^{\mathrm{int}(\alpha_5-\beta_5)}\cdots\Bigr),
\end{align*}
where
\begin{equation*}
a=(\mathrm{sgn}\:a)2^{\alpha_1}3^{\alpha_2}5^{\alpha_3}7^{\alpha_4}11^{\alpha_5}\cdots
\end{equation*}
and
\begin{equation*}
b=2^{\beta_1}3^{\beta_2}5^{\beta_3}7^{\beta_4}11^{\beta_5}\cdots
\end{equation*}
are the prime factorizations of $a$ and $b$, respectively.  We write $(p\,;q)$,
%
%
where $p<q$, to denote the open interval with endpoints $p$ and $q$.  We say that the interval is \emph{rational} if and only if $p$ and $q$ are both rational numbers.  A rational open interval $(p\,;q)$ can be encoded as a nonnegative integer $\mathrm{ival}(p\,;q)$ using
\begin{equation*}
\mathrm{ival}(p\,;q)=J\bigl(\mathrm{rat}(p),\mathrm{rat}(q)\bigr).
\end{equation*}
%
%
We use $\mathrm{ival}(p\,;q)$ to encode any joint measurement of a strict lower bound $p$ and a strict upper bound $q$.

For example, consider the claim that every electron has a mass of $m$ kilograms, where $m$ is some real number.  One plausible way to formalize this claim is to state that every strict lower bound that is measured for the mass of an electron is less than $m$ kilograms, and every strict upper bound measured for the mass is greater than $m$ kilograms.  A closely related way to formalize the claim
%
%
is to state that for every joint measurement of a strict lower bound $p$ and a strict upper bound $q$ for the electron's mass, $m\in(p\,;q)$.  Similarly, the claim can be formalized by asserting that the following nonnegative integer physical model is faithful.
\begin{model}
Consider a two-sorted nonnegative integer physical language with a symbol $f$ for an observable quantity, and with no additional physical symbols.  Let $\mathfrak{A}$ be a structure for this language, where $\lvert\mathfrak{A}\rvert_S$ is the set of all rational open intervals $(p\,;q)$ that contain the real number $m$.  Define $\mathrm{op}(f)$ to be an operation that jointly measures strict lower and upper bounds for the mass of an electron (in kilograms), and let $f^\mathfrak{A}(p\,;q)=\mathrm{ival}(p\,;q)$.
\end{model}
\noindent
Note that the states in this model can be thought of as corresponding to different states of the measurer, with different lower and upper bounds being measured in different states.

Now, any Cartesian product $(p_1\,;q_1)\times(p_2\,;q_2)\times\cdots\times(p_d\,;q_d)$ of open intervals is said to be an \emph{open rectangle}.
%
%
We say that the rectangle is \emph{rational} if and only if each interval is rational.  And we use the function
\begin{equation*}
\mathrm{rect}_d(i_1\times i_2\times\cdots\times i_d)=J\bigl(\mathrm{ival}(i_1),\mathrm{ival}(i_2),\ldots,\mathrm{ival}(i_d)\bigr),
\end{equation*}
where $d$ is a positive integer, to encode each rational open rectangle $i_1\times i_2\times\cdots\times i_d$ as a nonnegative integer.\footnote{
We define $J(a)=a$ for each $a\in\mathbb{N}$.  Hence, for the $d=1$ case, $\mathrm{rect}_1(i_1)=\mathrm{ival}(i_1)$.
}  In addition, we refer to any joint measurement of strict lower and upper bounds as a measurement of \emph{strict bounds}.

Now consider one mole of a gas at thermodynamic equilibrium in a sealed container.  The ideal gas law~\cite[pp.\ 9--10 \& 70]{Bowley1999} states that the gas's pressure $P$, volume $V$, and temperature $T$, in standard SI units, are real numbers that satisfy the equation
\begin{equation}\label{E:gas-law}
PV=N_Ak_BT,
\end{equation}
where $N_A$ and $k_B$ are constants.\footnote{
These are Avogadro's number and Boltzmann's constant, respectively.  By definition~\cite[pp.\ 133--134]{CCU2019}, $N_A$ is the integer $602214076\times10^{15}$, and the numerical value of $k_B$ is exactly equal to the rational number $1380649\times10^{-29}$ when the constant is expressed in SI units.
}  Given any point $\langle P,V,T\rangle$ in $\mathbb{R}^3$, the claim that $P$, $V$, and $T$ are the gas's pressure, volume, and temperature, respectively, can be formalized by stating that
\begin{equation*}
\langle P,V,T\rangle\in(p_1\,;q_1)\times(p_2\,;q_2)\times(p_3\,;q_3)
\end{equation*}
for every measurement $\mathrm{ival}(p_1\,;q_1)$ of strict bounds for the pressure, for every measurement $\mathrm{ival}(p_2\,;q_2)$ of strict bounds for the volume, and for every measurement $\mathrm{ival}(p_3\,;q_3)$ of strict bounds for the temperature.  Using a similar idea, the claim that the gas satisfies the ideal gas law can be formalized by asserting that the following nonnegative integer physical model is faithful.
\begin{model}\label{M:ideal-gas}
Consider a two-sorted nonnegative integer physical language with a symbol $f$ for an observable quantity, and with no additional physical symbols.  Let $\mathfrak{A}$ be a structure for this language, where $\lvert\mathfrak{A}\rvert_S$ is the set of all $\bigl\langle\langle P,V,T\rangle,r\bigr\rangle$ such that $P$, $V$, and $T$ are real numbers that satisfy equation~\eqref{E:gas-law}, and such that $r$ is a rational open rectangle containing the point $\langle P,V,T\rangle$.  Define $\mathrm{op}(f)$ to be an operation that measures, in SI units, strict bounds $b_1$ for the gas's pressure, strict bounds $b_2$ for the gas's volume, and strict bounds $b_3$ for the gas's temperature.  The result of this joint measurement is encoded as $J(b_1,b_2,b_3)$.  Let $f^\mathfrak{A}\bigl\langle\langle P,V,T\rangle,r\bigr\rangle=\mathrm{rect}_3(r)$.
\end{model}

Throughout the sciences, it is often the case that a model is specified by describing a nonempty closed set in $\mathbb{R}^d$, for some positive integer $d$.  The ideal gas law for one mole of gas is an example of such a model, since the set of all triples $\langle P,V,T\rangle$ of real numbers that satisfy equation~\eqref{E:gas-law} is a nonempty closed set in $\mathbb{R}^3$.  For any such nonempty closed set, we can define a \emph{complete basic neighborhood model} for the set, as follows.
\begin{definition}\label{D:cbnm-euclidean}
Let $d$ be a positive integer, and let $A$ be any nonempty closed set in $\mathbb{R}^d$.  We say that a nonnegative integer physical model with a structure $\mathfrak{A}$ is a \emph{complete basic neighborhood model} for $A$ if and only if
\begin{enumerate}
\renewcommand{\theenumi}{\roman{enumi}}
\item the language for $\mathfrak{A}$ has only one symbol $f$ for an observable quantity;
\item $\lvert\mathfrak{A}\rvert_S$ is the set of all ordered pairs $\langle\mathbf{x},r\rangle$ such that $\mathbf{x}\in A$ and such that $r$ is a rational open rectangle that contains $\mathbf{x}$; and
\item $f^\mathfrak{A}\langle\mathbf{x},r\rangle=\mathrm{rect}_d(r)$ for each $\langle\mathbf{x},r\rangle\in\lvert\mathfrak{A}\rvert_S$.
\end{enumerate}
\end{definition}

Note that a complete basic neighborhood model can have any measuring operation.  But typically, when a model is informally
%
%
specified by a nonempty closed set in $\mathbb{R}^d$, the $i$th component of each point in $\mathbb{R}^d$ is identified with a physical quantity $\mathcal{P}_i$, such as a pressure, volume, or temperature.  In this case, one may consider a measuring operation that jointly measures strict bounds $b_1$, $b_2$, \ldots, $b_d$ for $\mathcal{P}_1$, $\mathcal{P}_2$, \ldots, $\mathcal{P}_d$, respectively, with the result of the joint measurement encoded as $J(b_1,b_2,\ldots,b_d)$.  If such an operation exists, then the informal model can be formalized as a complete basic neighborhood model with this measuring operation.  Model~\ref{M:ideal-gas} formalizes the ideal gas law for one mole of gas in exactly this sense.

Another example is the simple harmonic oscillator model~\cite[Sect.\ 3.2]{Thornton2004} for a mass that is constrained to move in one dimension, and that is subject to a linear restoring force when the mass is displaced from its equilibrium position.
%
%
This model specifies that when measured in SI units, the amplitude $a$ of the mass's oscillations about the equilibrium position, the angular frequency $\omega$ of those oscillations,
%
%
the time $t$, a time $t_0$ at which the mass achieves maximum displacement,
%
%
and the displacement $x$ of the mass at time $t$ are all real numbers such that
\begin{equation*}
a\cos(\omega t-\omega t_0)=x.
\end{equation*}
The set of all quintuples $\langle a,\omega,t,t_0,x\rangle$ of real numbers satisfying this equation is a nonempty closed set in $\mathbb{R}^5$, and there is a measuring operation that jointly measures strict bounds for the five physical quantities in the model (that is, for the amplitude of the oscillations about the equilibrium position, the angular frequency of those oscillations, and so on).  Hence, the simple harmonic oscillator model can be formalized as a complete basic neighborhood model with this measuring operation.

\section{Topological Spaces}\label{S:t-spaces}

The formalism discussed in the previous section can be extended from $\mathbb{R}^d$ to more general topological spaces.\footnote{
We assume that the reader is familiar with the terminological conventions in, for example, McCarty's topology textbook~\cite{McCarty1988}.
}  In particular, a model can be specified by describing a nonempty closed set $A$ in a topological space $\langle X,\tau\rangle$.  In this more general setting, Weihrauch and Zhong~\cite[pp.\ 329--330]{Weihrauch2002} suggest that the elements in a countable basis or subbasis (assuming that such a basis or subbasis exists for the space) play a role that is analogous to that of the rational open intervals in the usual topology on $\mathbb{R}$.  Using this analogy, the definition of a complete basic neighborhood model can be generalized as follows.
\begin{definition}\label{D:cbnm-topological}
Let $A$ be any nonempty closed\thinspace\footnote{
%
%
One could generalize the definition of a complete basic neighborhood model to allow sets $A$ that are not closed, but any such model formalizes the informal model specified by $A$ in a much weaker sense.  In particular, if one removes the requirement that $A$ must be closed, then Theorem~\ref{T:cbnm-formalization} and Corollary~\ref{C:cbnm-equivalence} fail to hold.
} set in a topological space $\langle X,\tau\rangle$.  We say that a nonnegative integer physical model with a structure $\mathfrak{A}$ is a \emph{complete basic neighborhood model} for $A$ in the space $\langle X,\tau\rangle$ if and only if
\begin{enumerate}
\renewcommand{\theenumi}{\roman{enumi}}
\item the language for $\mathfrak{A}$ has only one symbol $f$ for an observable quantity;
\item $\langle X,\tau\rangle$ has a countable basis $\beta$;
\item\label{P:cbnm-states} $\lvert\mathfrak{A}\rvert_S$ is the set of all ordered pairs $\langle x,r\rangle$ such that $x\in A$ and such that $r$ is an element of $\beta$ that contains $x$; and
%
%
\item\label{P:cbnm-observable} $f^\mathfrak{A}\langle x,r\rangle=\mathrm{enc}(r)$ for each $\langle x,r\rangle\in\lvert\mathfrak{A}\rvert_S$, where $\mathrm{enc}\colon\beta\to\mathbb{N}$ is a one-to-one function for encoding the elements of $\beta$ as nonnegative integers.
\end{enumerate}
\end{definition}
\noindent
Note that the set of all rational open rectangles in $\mathbb{R}^d$ is a countable basis for the usual Euclidean topology on $\mathbb{R}^d$,
%
%
and $\mathrm{rect}_d$ is a one-to-one encoding for the basis.  Therefore, Definition~\ref{D:cbnm-euclidean} is a special case of Definition~\ref{D:cbnm-topological}.

As an example of a model specified by a nonempty closed set in a topological space, consider the claim that a sample of some chemical compound contains exactly $n$ molecules of the compound, where $n$ is a nonnegative integer.  We will use $\langle\mathbb{N},\delta\rangle$ to denote the set of all nonnegative integers with the discrete topology.  Every set of points is closed in $\langle\mathbb{N},\delta\rangle$, and the collection $\bigl\{\,\{a\}\bigm\vert a\in\mathbb{N}\,\bigr\}$ of all singleton sets is a countable basis for the space.  Define $\mathrm{sing}\bigl(\{a\}\bigr)=a$ for each $a\in\mathbb{N}$.  Then, the claim that the sample contains exactly $n$ molecules can be formalized by asserting that the following complete basic neighborhood model for the set $\{n\}$ in $\langle\mathbb{N},\delta\rangle$ is faithful.
\begin{model}
Consider a two-sorted nonnegative integer physical language with a symbol $f$ for an observable quantity, and with no additional physical symbols.  Let $\mathfrak{A}$ be a structure for this language where $\lvert\mathfrak{A}\rvert_S=\bigl\{\,\bigl\langle n,\{n\}\bigr\rangle\,\bigr\}$.  Define $\mathrm{op}(f)$ to be an operation that measures the exact number of molecules in the sample, and let $f^\mathfrak{A}\bigl\langle n,\{n\}\bigr\rangle=\mathrm{sing}\bigl(\{n\}\bigr)$.
\end{model}

Alternatively, the claim can be formalized using an operation that jointly measures lower and upper bounds for the number of molecules, rather than measuring the number of molecules exactly.  Note that
\begin{equation*}
\bigl\{\,\{a,a+1,a+2,\ldots,a+k\}\,\bigm\vert\,a\in\mathbb{N}\;\;\mathrm{and}\;\;k\in\mathbb{N}\,\bigr\}
\end{equation*}
is a countable basis for $\langle\mathbb{N},\delta\rangle$, and define
\begin{equation*} \mathrm{seg}\bigl(\{a,a+1,a+2,\ldots,a+k\}\bigr)=J(a,k)
\end{equation*}
%
%
for all nonnegative integers $a$ and $k$.  The claim can then be formalized by asserting that the following complete basic neighborhood model for $\{n\}$ in $\langle\mathbb{N},\delta\rangle$ is faithful.
\begin{model}\label{M:molecules-bounds}
Consider a two-sorted nonnegative integer physical language with a symbol $f$ for an observable quantity, and with no additional physical symbols.  Let $\mathfrak{A}$ be a structure for this language, where $\lvert\mathfrak{A}\rvert_S$ is the set of all
\begin{equation*}
\bigl\langle n,\{a,a+1,a+2,\ldots,a+k\}\bigr\rangle
\end{equation*}
such that $a$ and $k$ are nonnegative integers with $n\in\{a,a+1,a+2,\ldots,a+k\}$.  Define $\mathrm{op}(f)$ to be an operation that measures a lower bound $b$ and an upper bound $b+m$ for the number of molecules in the sample, where $b$ and $m$ are nonnegative integers.  The result of this joint measuring operation is encoded as $J(b,m)$.  Let $f^\mathfrak{A}\langle n,r\rangle=\mathrm{seg}(r)$ for each $\langle n,r\rangle\in\lvert\mathfrak{A}\rvert_S$.
\end{model}
\noindent
Note that the preceding two models have distinct bases, encodings, and measuring operations, despite the fact that they are both complete basic neighborhood models for the same set $\{n\}$ in the same space $\langle\mathbb{N},\delta\rangle$.

Now, the following theorem shows that when a complete basic neighborhood model is constructed for a nonempty closed set $A$ in a topological space, $A$ is uniquely determined by the set $\mathrm{ran}(\,f^\mathfrak{A})$ of possible measurement results that are allowed by the model.\footnote{
In this case, the function that maps $\mathrm{ran}(\,f^\mathfrak{A})$ to the set $A$ that is determined by $\mathrm{ran}(\,f^\mathfrak{A})$ is closely related to the inner representation for closed sets, as described by Weihrauch and Grubba~\cite[pp.\ 1388--1389]{Weihrauch2009}.
}
\begin{theorem}\label{T:cbnm-formalization}
Let $\mathfrak{A}$ be the structure of a complete basic neighborhood model for a nonempty closed set $A$ in a topological space $\langle X,\tau\rangle$ with a basis $\beta$ and encoding $\mathrm{enc}$.  Let $f$ be the symbol for the observable quantity in the language of $\mathfrak{A}$.  Then, for each point $x$ in $X$, $x\in A$ if and only if
\begin{equation}\label{E:cbnm-formalization}
\bigl\{\,\mathrm{enc}(r)\,\bigm\vert\,r\in\beta\;\;\mathrm{and}\;\;x\in r\,\bigr\}\subseteq\mathrm{ran}(\,f^\mathfrak{A}).
\end{equation}
\end{theorem}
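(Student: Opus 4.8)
The plan is to convert the set-theoretic inclusion~\eqref{E:cbnm-formalization} into the familiar topological characterization of closure, and then to invoke the hypothesis that $A$ is closed. First I would compute the range of $f^\mathfrak{A}$ explicitly. By parts~(\ref{P:cbnm-states}) and~(\ref{P:cbnm-observable}) of Definition~\ref{D:cbnm-topological}, every state has the form $\langle y,r\rangle$ with $y\in A$, $r\in\beta$, and $y\in r$, and $f^\mathfrak{A}\langle y,r\rangle=\mathrm{enc}(r)$. Hence
\begin{equation*}
\mathrm{ran}(\,f^\mathfrak{A})=\bigl\{\,\mathrm{enc}(r)\,\bigm\vert\,r\in\beta\;\;\mathrm{and}\;\;r\cap A\ne\varnothing\,\bigr\}.
\end{equation*}
Because $\mathrm{enc}$ is one-to-one, $\mathrm{enc}(r)\in\mathrm{ran}(\,f^\mathfrak{A})$ holds precisely when $r\cap A\ne\varnothing$. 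Consequently the inclusion~\eqref{E:cbnm-formalization} is equivalent to the assertion that every basic open set $r\in\beta$ with $x\in r$ satisfies $r\cap A\ne\varnothing$.

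With this translation established, I would prove the two directions. For the forward direction, suppose $x\in A$; then for each $r\in\beta$ with $x\in r$, the point $x$ itself lies in $r\cap A$, so $r\cap A\ne\varnothing$, and the translation above yields~\eqref{E:cbnm-formalization}. For the converse, suppose every $r\in\beta$ containing $x$ meets $A$, and I would show $x\in\overline{A}$, the closure of $A$. Given any open set $U$ with $x\in U$, the fact that $\beta$ is a basis produces an $r\in\beta$ with $x\in r\subseteq U$; since $r\cap A\ne\varnothing$, also $U\cap A\ne\varnothing$. Thus every open neighborhood of $x$ meets $A$, so $x\in\overline{A}$. Finally, because $A$ is closed, $\overline{A}=A$, and therefore $x\in A$.

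I do not anticipate a substantive obstacle; the argument is essentially the standard characterization of closure relative to a basis, packaged through the computation of $\mathrm{ran}(\,f^\mathfrak{A})$. The one point requiring care is the use of the injectivity of $\mathrm{enc}$ in passing between~\eqref{E:cbnm-formalization} and the condition $r\cap A\ne\varnothing$: one must verify that $\mathrm{enc}(r)$ lying in $\mathrm{ran}(\,f^\mathfrak{A})$ forces $r$ \emph{itself}, rather than merely some other basic set sharing its code, to meet $A$. The hypothesis that $A$ is closed enters only in the last line of the converse, which is exactly where the footnote to Definition~\ref{D:cbnm-topological} warns that the theorem fails without it.
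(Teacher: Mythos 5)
Your proof is correct and follows essentially the same route as the paper's: reduce the inclusion~\eqref{E:cbnm-formalization} to the statement that every basis element containing $x$ meets $A$, then conclude via the closure characterization and the hypothesis that $A$ is closed. Your explicit computation of $\mathrm{ran}(\,f^\mathfrak{A})$ and your flagged use of the injectivity of $\mathrm{enc}$ make fully explicit a step the paper's proof performs tacitly (when it infers from $\mathrm{enc}(r)\in\mathrm{ran}(\,f^\mathfrak{A})$ that some $\langle y,r\rangle$ with that same $r$ is a state), which is a small but genuine improvement in rigor.
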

\begin{proof}
Consider any $x\in X$, and suppose that $x\in A$.  By part~(\ref{P:cbnm-states}) of Definition~\ref{D:cbnm-topological},
\begin{equation*}
\bigl\{\,\langle x,r\rangle\,\bigm\vert\,r\in\beta\;\;\mathrm{and}\;\;x\in r\,\bigr\}\subseteq\lvert\mathfrak{A}\rvert_S.
\end{equation*}
Therefore, by part~(\ref{P:cbnm-observable}) of Definition~\ref{D:cbnm-topological}, condition~\eqref{E:cbnm-formalization} holds.  We have shown that $x\in A$ implies condition~\eqref{E:cbnm-formalization}.

Conversely, suppose that condition~\eqref{E:cbnm-formalization} holds.  By part~(\ref{P:cbnm-observable}) of Definition~\ref{D:cbnm-topological}, it must be the case that for each basis element $r$ that contains $x$, there is a $y\in A$ such that $\langle y,r\rangle\in\lvert\mathfrak{A}\rvert_S$.  Hence, by part~(\ref{P:cbnm-states}) of Definition~\ref{D:cbnm-topological}, for each basis element $r$ that contains $x$, there is a $y\in A$ such that $y\in r$.  That is, every basis element that contains $x$ intersects $A$.  So every neighborhood of $x$ intersects $A$.  Hence, $x$ is in the closure of $A$.
%
%
But since $A$ is closed, $x\in A$.  We have shown that condition~\eqref{E:cbnm-formalization} implies $x\in A$.
\end{proof}

\noindent
This theorem has the following corollary.
\begin{corollary}\label{C:cbnm-equivalence}
Let $\mathcal{A}$ be a complete basic neighborhood model for a nonempty closed set $A$ in a topological space $\langle X,\tau\rangle$, let $\mathcal{B}$ be a complete basic neighborhood model for a nonempty closed set $B$ in $\langle X,\tau\rangle$, and let $\mathcal{A}$ and $\mathcal{B}$ have the same basis $\beta$, the same encoding $\mathrm{enc}$, the same language, and the same measuring operation.  Then, $A=B$ if and only if $\mathcal{A}$ is observationally equivalent to $\mathcal{B}$.
\end{corollary}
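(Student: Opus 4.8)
The plan is to reduce everything to Theorem~\ref{T:cbnm-formalization}. First I would observe that, by Definition~\ref{D:cbnm-topological}, each of $\mathcal{A}$ and $\mathcal{B}$ has exactly one symbol $f$ for an observable quantity, so by the definition of observational equivalence the assertion ``$\mathcal{A}$ is observationally equivalent to $\mathcal{B}$'' amounts to the single equation $\mathrm{ran}(\,f^\mathfrak{A})=\mathrm{ran}(\,f^\mathfrak{B})$. Because the two models share the same basis $\beta$ and the same encoding $\mathrm{enc}$, it is convenient to abbreviate, for each $x\in X$,
\begin{equation*}
N(x)=\bigl\{\,\mathrm{enc}(r)\,\bigm\vert\,r\in\beta\;\;\mathrm{and}\;\;x\in r\,\bigr\},
\end{equation*}
a set that depends only on $x$, $\beta$, and $\mathrm{enc}$, and which is therefore computed identically in $\mathcal{A}$ and in $\mathcal{B}$.

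For the forward direction I would argue directly from Definition~\ref{D:cbnm-topological}. Parts~(\ref{P:cbnm-states}) and~(\ref{P:cbnm-observable}) give $\mathrm{ran}(\,f^\mathfrak{A})=\bigcup_{x\in A}N(x)$ and likewise $\mathrm{ran}(\,f^\mathfrak{B})=\bigcup_{x\in B}N(x)$. If $A=B$, these two unions coincide, whence $\mathrm{ran}(\,f^\mathfrak{A})=\mathrm{ran}(\,f^\mathfrak{B})$, so the models are observationally equivalent.

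For the converse, I would suppose $\mathcal{A}$ and $\mathcal{B}$ are observationally equivalent, i.e.\ $\mathrm{ran}(\,f^\mathfrak{A})=\mathrm{ran}(\,f^\mathfrak{B})$. Applying Theorem~\ref{T:cbnm-formalization} to $\mathcal{A}$ and then to $\mathcal{B}$, for every $x\in X$ we have $x\in A$ if and only if $N(x)\subseteq\mathrm{ran}(\,f^\mathfrak{A})$, and $x\in B$ if and only if $N(x)\subseteq\mathrm{ran}(\,f^\mathfrak{B})$. Since the two ranges are equal, the right-hand conditions are literally the same, so $x\in A$ if and only if $x\in B$ for every $x\in X$; that is, $A=B$.

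Essentially all of the work is carried by Theorem~\ref{T:cbnm-formalization}, which recovers a closed set from the range of its observable quantity, so I do not anticipate a serious obstacle. The one point that must be kept explicit is that the neighborhood-encoding set $N(x)$ is genuinely shared between the two models, which is exactly what the common basis $\beta$ and common encoding $\mathrm{enc}$ guarantee; without this hypothesis the membership criteria for $A$ and $B$ would be expressed in incomparable terms and the equivalence would fail. Making that sharing precise is the only delicate step.
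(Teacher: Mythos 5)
Your proof is correct and takes essentially the same approach the paper intends: the paper presents this corollary as an immediate consequence of Theorem~\ref{T:cbnm-formalization}, and your converse direction applies that theorem exactly as the paper envisions, while your forward direction follows directly from Definition~\ref{D:cbnm-topological} (equal closed sets give identical state sets, hence identical ranges). Your explicit observation that the common basis $\beta$ and encoding $\mathrm{enc}$ make the neighborhood-code sets $N(x)$ shared between the two models is precisely the point that makes the two applications of the theorem comparable.
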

%
%

Ideas from Weihrauch's theory of type-two effectivity~\cite{Weihrauch2000} can be used to define computable functions from one topological space into another topological space.  In particular, given any topological space $\langle X,\tau\rangle$ with a countable basis $\beta$ and with a one-to-one function $\mathrm{enc}\colon\beta\to\mathbb{N}$ for encoding the elements of $\beta$, we say that a function $\phi\colon\mathbb{N}\to\mathbb{N}$ is an \emph{oracle} for a point $x\in X$ if and only if
\begin{equation*}
\mathrm{ran}(\phi)=\bigl\{\,\mathrm{enc}(r)\,\bigm\vert\,r\in L\,\bigr\}
\end{equation*}
for some $L\subseteq\beta$ that is a local basis for $x$.  Let $\nu_\mathrm{enc}\colon\mathrm{ran}(\mathrm{enc})\to\beta$ be the function that is defined so that $\nu_\mathrm{enc}\bigl(\mathrm{enc}(r)\bigr)=r$ for each $r\in\beta$.  Thus, if $c$ is any nonnegative integer in $\mathrm{ran}(\mathrm{enc})$, then $\nu_\mathrm{enc}(c)$ is the basis element that is encoded by $c$.  We say that an oracle $\phi$ is \emph{nested} if and only if $\nu_\mathrm{enc}\bigl(\phi(i+1)\bigr)\subseteq\nu_\mathrm{enc}\bigl(\phi(i)\bigr)$ for each $i\in\mathbb{N}$.  And we say that the encoding $\mathrm{enc}\colon\beta\to\mathbb{N}$ has a \emph{recursively enumerable subset relation} if and only if the set
\begin{equation*}
\bigl\{\,J(c_1,c_2)\,\bigm\vert\,\nu_\mathrm{enc}(c_1)\subseteq\nu_\mathrm{enc}(c_2)\;\;\mathrm{and}\;\;c_1\in\mathrm{ran}(\mathrm{enc})\;\;\mathrm{and}\;\;c_2\in\mathrm{ran}(\mathrm{enc})\,\bigr\}
\end{equation*}
%
%
is recursively enumerable.

Now let $\langle X_1,\tau_1\rangle$ and $\langle X_2,\tau_2\rangle$ be topological spaces with countable bases $\beta_1$ and $\beta_2$, respectively, and with one-to-one encodings $\mathrm{enc}_1\colon\beta_1\to\mathbb{N}$ and $\mathrm{enc}_2\colon\beta_2\to\mathbb{N}$.  Given these bases and encodings, we say that a function $g$ from $\langle X_1,\tau_1\rangle$ into $\langle X_2,\tau_2\rangle$ is \emph{computable} if and only if there is a recursive partial function $h$ of one function variable and one number variable\footnote{
See Rogers~\cite[p.\ 347]{Rogers1987} for the definition of a recursive partial function of one function variable and one number variable, and for related notation.
%
%
} such that if $\phi$ is a nested oracle for any point $x$ in $X_1$, then $\lambda m[\,h(\phi,m)\,]$ is a nested oracle for the point $g(x)$ in $X_2$.\footnote{
Note that if the space $\langle X_2,\tau_2\rangle$ is $T_0$, then $g(x)$ is uniquely determined by the oracle $\lambda m[\,h(\phi,m)\,]$.  But if $\langle X_2,\tau_2\rangle$ is not $T_0$, then this might not be the case.
}  This definition of a computable function from $\langle X_1,\tau_1\rangle$ into $\langle X_2,\tau_2\rangle$ generalizes the Grzegorczyk-Lacombe definition~\cite{Grzegorczyk1957} of a computable function from $\mathbb{R}$ into $\mathbb{R}$.

In particular, the set of all rational open intervals is a countable basis for the usual topology on $\mathbb{R}$, and $\mathrm{ival}$ is a one-to-one encoding of these basis elements.  In this context, a function $\phi\colon\mathbb{N}\to\mathrm{ran}(\mathrm{ival})$ is a nested oracle for a real number $x$ if and only if
\begin{equation*}
\nu_\mathrm{ival}\bigl(\phi(0)\bigr),\;\;\nu_\mathrm{ival}\bigl(\phi(1)\bigr),\;\;\nu_\mathrm{ival}\bigl(\phi(2)\bigr),\;\;\ldots
\end{equation*}
is a sequence of nested intervals that form a local basis for $x$.  The Grzegorczyk-Lacombe computable functions are then the functions from $\mathbb{R}$ into $\mathbb{R}$ that are computable using these oracles for points in $\mathbb{R}$.\footnote{
See Weihrauch~\cite[p.\ 251]{Weihrauch2000} and Szudzik~\cite[Thms.\ 11.2 \& 11.3]{Szudzik2013}.
}

In many cases, the nonempty closed sets that are used to specify models are the graphs of functions.  The following theorem shows that if the graph of a computable function is a nonempty closed set, and if the encodings associated with the function have recursively enumerable subset relations, then the graph has a complete basic neighborhood model that is observationally equivalent to a computable physical model.
\begin{theorem}\label{T:graph}
Let $\langle X_1,\tau_1\rangle$ and $\langle X_2,\tau_2\rangle$ be topological spaces with countable bases $\beta_1$ and $\beta_2$, respectively, and with one-to-one encodings $\mathrm{enc}_1\colon\beta_1\to\mathbb{N}$ and $\mathrm{enc}_2\colon\beta_2\to\mathbb{N}$ with recursively enumerable subset relations.  Let $g$ be a computable function from $\langle X_1,\tau_1\rangle$ into $\langle X_2,\tau_2\rangle$ with these bases and encodings, and let the graph of $g$ be a nonempty closed set in the product topology on $X_1\times X_2$.  Then, any complete basic neighborhood model for the graph that has the basis $\beta_{1,2}=\{\,r_1\times r_2\mid\linebreak[0]
%
%
r_1\in\beta_1\;\;\mathrm{and}\;\;r_2\in\beta_2\,\}$ and encoding $\mathrm{enc}_{1,2}(r_1\times r_2)=J\bigr(\mathrm{enc}_1(r_1),\mathrm{enc}_2(r_2)\bigr)$ is observationally equivalent to a computable physical model.\footnote{
\label{N:encoding}The theorem also holds more generally for encodings of the form $\mathrm{enc}_{1,2}(r_1\times r_2)=e\bigr(\mathrm{enc}_1(r_1),\mathrm{enc}_2(r_2)\bigr)$, where $e$ is any one-to-one recursive partial function whose domain includes $\mathrm{ran}(\mathrm{enc}_1)\times\mathrm{ran}(\mathrm{enc}_2)$.
}
\end{theorem}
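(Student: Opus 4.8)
The plan is to reduce the theorem to a single computability statement about $\mathrm{ran}(\,f^\mathfrak{A})$, and then to extract a semidecision procedure for that range from the functional $h$ witnessing the computability of $g$. Let $\mathfrak{A}$ be the structure of the given complete basic neighborhood model $\mathcal{A}$ for the graph, with $f$ its unique symbol for an observable quantity. Exactly as in the computable version of Corollary~\ref{C:countable}, if $\mathrm{ran}(\,f^\mathfrak{A})$ is a nonempty recursively enumerable set, then by \cite[p.~82, Thm.~4.9]{Davis1994} there is a recursive partial function with domain $\mathbb{N}$ and range $\mathrm{ran}(\,f^\mathfrak{A})$; taking this as $f^\mathfrak{B}$ on a structure $\mathfrak{B}$ with $\lvert\mathfrak{B}\rvert_S=\mathbb{N}$, while keeping the language and measuring operation of $\mathcal{A}$, yields a computable physical model $\mathcal{B}$ that is observationally equivalent to $\mathcal{A}$. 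The range is nonempty because the graph of $g$ is nonempty, so it suffices to prove that $\mathrm{ran}(\,f^\mathfrak{A})$ is recursively enumerable.

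By parts~(\ref{P:cbnm-states}) and~(\ref{P:cbnm-observable}) of Definition~\ref{D:cbnm-topological}, $\mathrm{ran}(\,f^\mathfrak{A})$ is the set of all $\mathrm{enc}_{1,2}(r_1\times r_2)=J\bigl(\mathrm{enc}_1(r_1),\mathrm{enc}_2(r_2)\bigr)$ for which $r_1\times r_2$ meets the graph, that is, for which there is an $x\in r_1$ with $g(x)\in r_2$. I would first note that each $\mathrm{ran}(\mathrm{enc}_i)$ is recursively enumerable (take $c_1=c_2$ in the recursively enumerable subset relation), so the candidate pairs $\langle r_1,r_2\rangle$ can be enumerated, and that each test ``$\nu_{\mathrm{enc}_i}(c)\subseteq\nu_{\mathrm{enc}_i}(c')$'' is semidecidable, again by the recursively enumerable subset relation. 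The problem thus reduces to semideciding, uniformly in $\langle r_1,r_2\rangle$, the statement that there is an $x\in r_1$ with $g(x)\in r_2$.

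The heart of the matter is to build this semidecision procedure out of $h$. Because $h$ is a recursive partial function of a function variable, any convergent computation $h(\phi,m)$ consults only finitely many values of $\phi$ and so is determined by a finite initial segment. I would therefore dovetail over all finite sequences $\sigma=\langle c_0,\dots,c_j\rangle$ of $\mathrm{enc}_1$-codes, all $m$, and all step bounds, accepting $\langle r_1,r_2\rangle$ as soon as some $\sigma$ is found that is nested (meaning $\nu_{\mathrm{enc}_1}(c_{i+1})\subseteq\nu_{\mathrm{enc}_1}(c_i)$ for $i<j$), whose last set satisfies $\nu_{\mathrm{enc}_1}(c_j)\subseteq r_1$, and for which $h$ run on the finite oracle $\sigma$ converges, querying only indices at most $j$, to a code $c$ with $\nu_{\mathrm{enc}_2}(c)\subseteq r_2$. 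Each condition is semidecidable by the preceding paragraph, so the entire search is a semidecision procedure.

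It remains to check that the procedure accepts $\langle r_1,r_2\rangle$ exactly when some $x\in r_1$ has $g(x)\in r_2$. For completeness I would fix such an $x$ and a nested oracle $\phi$ for it: since $x\in r_1$ and $\mathrm{ran}(\phi)$ is a local basis, nestedness gives $\nu_{\mathrm{enc}_1}(\phi(i))\subseteq r_1$ for large $i$, and since $\lambda m[\,h(\phi,m)\,]$ is a nested oracle for $g(x)\in r_2$, some output lies inside $r_2$; truncating $\phi$ past both thresholds and past the finitely many queried indices produces an accepting $\sigma$. The main obstacle is soundness, where I must manufacture an actual point from an accepting $\sigma$. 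For this I would prove an extension lemma: in a second-countable space, any finite nested sequence of nonempty basis elements extends to a full nested oracle $\phi$ for an arbitrarily chosen $x\in\nu_{\mathrm{enc}_1}(c_j)$, obtained by refining $\nu_{\mathrm{enc}_1}(c_j)\cap U_0\cap\dots\cap U_n$ by basis elements, where $U_0,U_1,\dots$ is any countable local basis at $x$. Then $h(\phi,m)=c$, since the computation used only $\sigma$, so $\nu_{\mathrm{enc}_2}(c)$ belongs to a local basis for $g(x)$ and hence contains $g(x)$; combined with $\nu_{\mathrm{enc}_2}(c)\subseteq r_2$ and $x\in\nu_{\mathrm{enc}_1}(c_j)\subseteq r_1$, this gives $x\in r_1$ and $g(x)\in r_2$. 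Assembling the enumerations then exhibits $\mathrm{ran}(\,f^\mathfrak{A})$ as recursively enumerable, which completes the reduction; the same argument covers the more general encodings of footnote~\ref{N:encoding}, since only recursiveness and injectivity of the pairing are used. I expect the delicate points to be the finite-use property of $h$ and the soundness step, together with the standing assumption that basis elements are nonempty, which the selection of the witness $x$ requires.
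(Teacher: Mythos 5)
Your proposal is correct and takes essentially the same route as the paper's proof: both reduce the theorem to showing that $\mathrm{ran}(\,f^\mathfrak{A})$ is a nonempty recursively enumerable set, and both extract the enumeration from $h$ by simulating it on finite nested sequences of $\mathrm{enc}_1$-codes while verifying that no oracle query exceeds the sequence length, with completeness by truncating a genuine nested oracle at its use bound and soundness by extending an accepting finite sequence to a nested oracle for an arbitrary point of the last basis element (the extension step the paper merely asserts and you prove explicitly, and the nonemptiness of that basis element is silently assumed by the paper at the same point). The remaining differences are organizational rather than mathematical: you semidecide membership per candidate pair using componentwise subset tests, where the paper enumerates a set $C$ of input--output codes and closes upward under the product subset relation to obtain $D=\mathrm{ran}(\,f^\mathfrak{A})$, and you build the final model with $\lvert\mathfrak{B}\rvert_S=\mathbb{N}$ via the range theorem where the paper takes $f^\mathfrak{B}$ to be the identity on the recursively enumerable set itself.
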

\begin{proof}
Let $A=\bigl\{\,\bigl\langle x,g(x)\bigr\rangle\bigm\vert x\in X_1\,\bigr\}$ be the graph of $g$, and let $\mathfrak{A}$ be the structure of any complete basic neighborhood model for $A$ that has the basis $\beta_{1,2}$ and encoding $\mathrm{enc}_{1,2}$.  Let $f$ be the symbol for the observable quantity in the language of $\mathfrak{A}$.  Then let $B$ be the set of all nonnegative integers of the form
\begin{equation*}
J\bigl(a,b,J(c_0,c_1,\ldots,c_b)\bigr),
\end{equation*}
where $a$ and $b$ are nonnegative integers with $a\leq b$, and where $c_0$, $c_1$, \ldots, $c_b$ are members of $\mathrm{ran}(\mathrm{enc}_1)$ such that the basis element encoded by $c_{i+1}$ is a subset of the basis element encoded by $c_i$ (that is, $\nu_{\mathrm{enc}_1}(c_{i+1})\subseteq\nu_{\mathrm{enc}_1}(c_i)$) for each nonnegative integer $i<b$.  Because the encoding $\mathrm{enc}_1$ has a recursively enumerable subset relation, $B$ is a recursively enumerable set.  For each finite sequence $\langle c_0,c_1,\ldots,c_b\rangle$ of nonnegative integers, and for each nonnegative integer $i$, define
\begin{equation*}
\mathrm{seq}_{\langle c_0,c_1,\ldots,c_b\rangle}(i)=\begin{cases}
c_i &\text{if $i\leq b$}\\
0 &\text{otherwise}
\end{cases}.
\end{equation*}
Now, because $g$ is a computable function, there exists a recursive partial function $h$ of one function variable and one number variable such that if $\phi$ is a nested oracle for any point $x$ in $X_1$, then $\lambda m[\,h(\phi,m)\,]$ is a nested oracle for $g(x)$.  Choose an effective procedure for calculating the function $h$.  Then let $p$ be the recursive partial function such that, for each nonnegative integer $i$, $p(i)$ is calculated according to the following two-step procedure:
\begin{description}
\item[Step 1] Use a semidecision procedure for $B$ to verify that $i\in B$.  If $i\notin B$, then $p(i)$ is undefined.
\item[Step 2] Let $a$, $b$, $c_0$, $c_1$, \ldots, $c_b$ be nonnegative integers such that
\begin{equation*}
i=J\bigl(a,b,J(c_0,c_1,\ldots,c_b)\bigr).
\end{equation*}
Then use the procedure for calculating $h$ to verify that
\begin{enumerate}
\renewcommand{\theenumi}{\roman{enumi}}
\item $h(\mathrm{seq}_{\langle c_0,c_1,\ldots,c_b\rangle},a)$ is defined; and
\item $\mathrm{seq}_{\langle c_0,c_1,\ldots,c_b\rangle}$ is not given any input greater than $b$ in the course of the calculation for $h(\mathrm{seq}_{\langle c_0,c_1,\ldots,c_b\rangle},a)$.
\end{enumerate}
Let $p(i)=J\bigl(c_a,h(\mathrm{seq}_{\langle c_0,c_1,\ldots,c_b\rangle},a)\bigr)$ if both of these conditions hold.  Otherwise, $p(i)$ is undefined.
\end{description}
Because $p$ is a recursive partial function, the range of $p$ is a recursively enumerable set.
%
%
Let $C$ be this recursively enumerable set.

Now consider any $\bigl\langle x,g(x)\bigr\rangle\in A$ and let $\phi$ be a nested oracle for $x$.  Then $\lambda a[\,h(\phi,a)\,]$ is a nested oracle for $g(x)$.  Hence, $\lambda a\bigl[J\bigl(\phi(a),h(\phi,a)\bigr)\bigr]$ is a nested oracle for $\bigl\langle x,g(x)\bigr\rangle$.  For each $a\in\mathbb{N}$, let $b$ be an integer greater than or equal to $a$ such that $\phi$ is not given an input greater than $b$ in the course of the calculation for $h(\phi,a)$.  Because there can only be finitely many steps in the calculation for $h(\phi,a)$, such an integer $b$ must exist.  Then, for each nonnegative integer $i\leq b$, let $c_i=\phi(i)$.  It immediately follows that
\begin{equation*}
J\bigl(\phi(a),h(\phi,a)\bigr)=J\bigl(c_a,h(\mathrm{seq}_{\langle c_0,c_1,\ldots,c_b\rangle},a)\bigr).
\end{equation*}
Hence, $J\bigl(\phi(a),h(\phi,a)\bigr)\in C$ for each $a\in\mathbb{N}$.  And because $\lambda a\bigl[J\bigl(\phi(a),h(\phi,a)\bigr)\bigr]$ is an oracle for $\bigl\langle x,g(x)\bigr\rangle$, the set $\bigl\{\,\nu_{\mathrm{enc}_{1,2}}\bigl(J(\phi(a),h(\phi,a))\bigr)\bigm\vert a\in\mathbb{N}\,\bigr\}$ is a local basis for $\bigl\langle x,g(x)\bigr\rangle$.  We have shown that for each $\bigl\langle x,g(x)\bigr\rangle\in A$, there exists a $K\subseteq C$ such that $\{\,\nu_{\mathrm{enc}_{1,2}}(k)\mid k\in K\,\}$ is a local basis for $\bigl\langle x,g(x)\bigr\rangle$.

Next, consider any $J\bigl(c_a,h(\mathrm{seq}_{\langle c_0,c_1,\ldots,c_b\rangle},a)\bigr)$ in $C$.  Let $x$ be any point in $\nu_{\mathrm{enc}_1}(c_b)$, and let $\phi$ be a nested oracle for $x$ such that $\phi(i)=c_i$ for each nonnegative integer $i\leq b$.  Then $c_a=\phi(a)$, and because the functions $\mathrm{seq}_{\langle c_0,c_1,\ldots,c_b\rangle}$ and $\phi$ agree for all inputs that are less than or equal to $b$, $h(\mathrm{seq}_{\langle c_0,c_1,\ldots,c_b\rangle},a)=h(\phi,a)$.  Hence,
\begin{equation*}
J\bigl(c_a,h(\mathrm{seq}_{\langle c_0,c_1,\ldots,c_b\rangle},a)\bigr)=J\bigl(\phi(a),h(\phi,a)\bigr).
\end{equation*}
And because $\phi$ is an oracle for $x$, and $\lambda m[\,h(\phi,m)\,]$ is an oracle for $g(x)$, there exist basis elements $r_1\in\beta_1$ and $r_2\in\beta_2$ with $x\in r_1$ and $g(x)\in r_2$ such that $\phi(a)=\mathrm{enc}_1(r_1)$ and $h(\phi,a)=\mathrm{enc}_2(r_2)$.  Hence,
\begin{align*}
J\bigl(c_a,h(\mathrm{seq}_{\langle c_0,c_1,\ldots,c_b\rangle},a)\bigr)&=J\bigl(\phi(a),h(\phi,a)\bigr)\\
&=J\bigl(\mathrm{enc}_1(r_1),\mathrm{enc}_2(r_2)\bigr)\\
&=\mathrm{enc}_{1,2}(r_1\times r_2),
\end{align*}
where $r_1\times r_2\in\beta_{1,2}$ and $\bigl\langle x,g(x)\bigr\rangle\in r_1\times r_2$.  But $\bigl\langle x,g(x)\bigr\rangle\in A$.  Therefore, we have shown that for each $J\bigl(c_a,h(\mathrm{seq}_{\langle c_0,c_1,\ldots,c_b\rangle},a)\bigr)\in C$, there exists a point $\bigl\langle x,g(x)\bigr\rangle\in A$ such that
\begin{equation*}
\bigl\langle x,g(x)\bigr\rangle\in r_1\times r_2=\nu_{\mathrm{enc}_{1,2}}\bigl(J(c_a,h(\mathrm{seq}_{\langle c_0,c_1,\ldots,c_b\rangle},a))\bigr).
\end{equation*}

Because $\mathrm{enc}_1$ and $\mathrm{enc}_2$ have recursively enumerable subset relations, $\mathrm{enc}_{1,2}$ also has a recursively enumerable subset relation.
%
%
Let $D$ be the set of all $d\in\mathrm{ran}(\mathrm{enc}_{1,2})$ such that there exists a $c\in C$ with $\nu_{\mathrm{enc}_{1,2}}(c)\subseteq\nu_{\mathrm{enc}_{1,2}}(d)$.  Then $D$ is a recursively enumerable set because $C$ is a recursively enumerable set, and because $\mathrm{enc}_{1,2}$ has a recursively enumerable subset relation.  Now consider any $d\in D$.  By definition, there exists a $c\in C$ with $\nu_{\mathrm{enc}_{1,2}}(c)\subseteq\nu_{\mathrm{enc}_{1,2}}(d)$.  But we have shown that for every $c\in C$ there exists a point $\bigl\langle x,g(x)\bigr\rangle\in A$ such that $\bigl\langle x,g(x)\bigr\rangle\in\nu_{\mathrm{enc}_{1,2}}(c)$.  Hence, there exists a point $\bigl\langle x,g(x)\bigr\rangle\in A$ with
\begin{equation*}
\bigl\langle x,g(x)\bigr\rangle\in\nu_{\mathrm{enc}_{1,2}}(c)\subseteq\nu_{\mathrm{enc}_{1,2}}(d).
\end{equation*}
But by Definition~\ref{D:cbnm-topological}, $\mathrm{ran}(\,f^\mathfrak{A})$ is the set of all $d\in\mathrm{ran}(\mathrm{enc}_{1,2})$ such that there exists a point $\bigl\langle x,g(x)\bigr\rangle\in A$ with $\bigl\langle x,g(x)\bigr\rangle\in\nu_{\mathrm{enc}_{1,2}}(d)$.  Therefore, we have shown that $d\in D$ implies $d\in \mathrm{ran}(\,f^\mathfrak{A})$.

Conversely, suppose that $d\in \mathrm{ran}(\,f^\mathfrak{A})$.  By Definition~\ref{D:cbnm-topological}, there must exist a point $\bigl\langle x,g(x)\bigr\rangle\in A$ such that $\bigl\langle x,g(x)\bigr\rangle\in\nu_{\mathrm{enc}_{1,2}}(d)$.  But there is a set $K\subseteq C$ such that $\{\,\nu_{\mathrm{enc}_{1,2}}(k)\mid k\in K\,\}$ is a local basis for $\bigl\langle x,g(x)\bigr\rangle$.  Therefore, there must exist a $k\in C$ such that $\bigl\langle x,g(x)\bigr\rangle\in\nu_{\mathrm{enc}_{1,2}}(k)\subseteq\nu_{\mathrm{enc}_{1,2}}(d)$.  It immediately follows from the definition of $D$ that $d\in D$.  In addition to showing that $d\in D$ implies $d\in \mathrm{ran}(\,f^\mathfrak{A})$, we have now shown that $d\in \mathrm{ran}(\,f^\mathfrak{A})$ implies $d\in D$.  Hence, $D=\mathrm{ran}(\,f^\mathfrak{A})$.  We have established that $\mathrm{ran}(\,f^\mathfrak{A})$ is a recursively enumerable set.  Now let $\mathcal{B}$ be a nonnegative integer physical model that has the same language and measuring operation as the complete basic neighborhood model for $A$, and that has a structure $\mathfrak{B}$ which is defined so that $\lvert\mathfrak{B}\rvert_S=\mathrm{ran}(\,f^\mathfrak{A})$, and so that $f^\mathfrak{B}(s)=s$ for each $s\in\lvert\mathfrak{B}\rvert_S$.  It immediately follows that $\mathcal{B}$ is observationally equivalent to the complete basic neighborhood model for $A$.  And because $\lvert\mathfrak{B}\rvert_S$ is a recursively enumerable set, $\mathcal{B}$ is a computable physical model.
\end{proof}

Although the graph of a computable function is not necessarily a nonempty closed set, most commonly encountered functions have graphs that are nonempty and closed.  In particular, the graph of every continuous function from a topological space $\langle X_1,\tau_1\rangle$ into a Hausdorff space $\langle X_2,\tau_2\rangle$ is necessarily~\cite[p.\ 140]{Dugundji1966}
%
%
closed in the product topology on $X_1\times X_2$.  Since every computable function is continuous,
this implies that every computable function from a nonempty topological space into a Hausdorff space has a nonempty closed graph.

Unless explicitly stated otherwise, we will follow the convention in Section~\ref{S:real} and use the set of all rational open rectangles in $\mathbb{R}^d$ as a basis for the usual topology on $\mathbb{R}^d$, and we will use $\mathrm{rect}_d$ as an encoding for the basis.  Given this convention, consider any computable function $g\colon\mathbb{R}^c\to\mathbb{R}^d$, where $c$ and $d$ are positive integers.  Because $\mathbb{R}^c$ is nonempty, and because the usual Euclidean topology on $\mathbb{R}^d$ is Hausdorff,
%
%
the graph of $g$ is a nonempty closed set.  The encodings $\mathrm{rect}_c$ and $\mathrm{rect}_d$ also have recursively enumerable subset relations.  Therefore, given the graph of a computable function from $\mathbb{R}^c$ into $\mathbb{R}^d$, Theorem~\ref{T:graph} implies that any complete basic neighborhood model for the graph that uses our conventional basis and encoding for the usual topology on $\mathbb{R}^{c+d}$ is observationally equivalent to a computable physical model.
Various functions from $\mathbb{R}^c$ into $\mathbb{R}^d$ are known to be computable.\footnote{
See Pour-El and Richards~\cite[pp.\ 27 \& 30]{PourEl2016}, for example.
}  In particular, $g(P,V)=\tfrac{1}{N_Ak_B}PV$ is a computable function from $\mathbb{R}^2$ into $\mathbb{R}$, and the graph of this function is the set of all triples of real numbers satisfying equation~\eqref{E:gas-law}.  Hence, Theorem~\ref{T:graph} implies that the complete basic neighborhood model for one mole of an ideal gas (Model~\ref{M:ideal-gas}) is observationally equivalent to a computable physical model.  Similarly, $g(a,\omega,t,t_0)=a\cos(\omega t-\omega t_0)$ is a computable function from $\mathbb{R}^4$ into $\mathbb{R}$, so the complete basic neighborhood model for a simple harmonic oscillator, as described near the end of Section~\ref{S:real}, is observationally equivalent to a computable physical model.  Using this approach, many commonly encountered models can be formalized as computable physical models.

\section{Probabilities}\label{S:probabilities}

The language of \emph{elementary real analysis}\footnote{
See Rogers~\cite[p.\ 386]{Rogers1987} for an alternate formulation of elementary real analysis.
} is the two-sorted first-order language that can be obtained from first-order number theory by introducing a sort $R$ for real numbers, introducing a predicate symbol for equality of sort $\langle R,R\rangle$, introducing function symbols for addition and multiplication of sort $\langle R,R,R\rangle$, and introducing a function symbol $\mathrm{conv}_{N\to R}$ of sort $\langle N,R\rangle$.  The symbol $\mathrm{conv}_{N\to R}$ is intended to denote the function that maps each nonnegative integer $n$ to the corresponding real number that is numerically equivalent to $n$.

Many models in the sciences associate real number probabilities with physical phenomena.\footnote{
We assume that readers are familiar with basic terminology for probability and statistics as found, for example, in Taylor's textbook~\cite{Taylor1997}.
%
%
}  For example, atoms of the radioactive isotope copper-64 have been observed~\cite{Be2012} to undergo $\beta^-$ decay.  But other decay modes for copper-64, such as $\beta^+$ decay, have also been observed.  Given that an atom has decayed, its decay mode is usually modeled as having been chosen randomly, with the model assigning each decay mode a probability of having been chosen.  This probability is said to be the \emph{branching ratio} for the decay mode.  In a sample where several atoms have undergone radioactive decay, each atom's decay is modeled as an independent trial.  Hence, in a sample where $i$ many copper-64 atoms have decayed, the probability that $j$ of those decays were $\beta^-$ decays is given by a binomial distribution.  In particular, the probability is given by
\begin{equation*}
B_{i,b}(j)=\tbinom{i}{j}b^j(1-b)^{i-j},
\end{equation*}
where $b$ is the branching ratio for $\beta^-$ decay in copper-64.\footnote{
Note that we define $0^0=1$ for the binomial distribution.  See Knuth~\cite[p.\ 408]{Knuth1992}.
}  This model for $\beta^-$ decay in a sample of copper-64 can be formalized by the following nonnegative integer physical model, given any real number $b$ such that $0\leq b\leq1$.
\begin{model}\label{M:decay}
Consider a nonnegative integer physical language that has three sorts, $N$, $S$, and $R$, and that contains the symbols for elementary real analysis.  The only additional physical symbols in the language are a symbol $f$ for an observable quantity and a function symbol $p$ of sort $\langle S,R\rangle$.  Let $\mathfrak{A}_b$ be a structure for this language, where the nonlogical symbols of elementary real analysis are assigned their traditionally intended meanings, and where $\lvert\mathfrak{A}_b\rvert_S$ is the set of all $\langle i,j,q\rangle$ such that $i$ and $j$ are nonnegative integers with $i\geq j$, and $q=B_{i,b}(j)$.
%
%
Define $\mathrm{op}(f)$ to be an operation that measures the total number $m$ of copper-64 atoms in the sample that have undergone radioactive decay, together with the number $n$ of those atoms that have undergone $\beta^-$ decay.  The result of this joint measuring operation is encoded as $J(m,n)$.  Let $f^{\mathfrak{A}_b}\langle i,j,q\rangle=J(i,j)$, and let $p^{\mathfrak{A}_b}\langle i,j,q\rangle=q$.
\end{model}

Assuming that the total number of decays measured in the sample is always greater than or equal to the number of $\beta^-$ decays that are measured, Model~\ref{M:decay} is faithful.  This is because the model allows all possible measurement results of the form $J(i,j)$ where $i\geq j$.  Each state $\langle i,j,q\rangle\in\lvert\mathfrak{A}_b\rvert_S$ is also labeled with a real number $q$.  This number $q$ is a probability because, for each nonnegative integer $i$, the set $\{\,\langle j,q\rangle\mid\langle i,j,q\rangle\in\lvert\mathfrak{A}_b\rvert_S\,\}$ is the graph of a binomial distribution.  Hence, the statement that ``in a sample with a total of $i$ many decays, the model associates the probability $q$ with the possibility of $j$ many $\beta^-$ decays'' can be expressed\footnote{
Note that the function $J$ is definable in first-order number theory.
%
%
} in the language of the model as
\begin{equation*}
\exists_S\,s\,\bigl(\,f(s)=J(i,j)\;\wedge\;p(s)=q\,\bigr).
\end{equation*}

Statistical tests are often used to compare a model, such as Model~\ref{M:decay}, with its measurement results.\footnote{
Several commonly encountered statistical tests are described by Taylor~\cite[pp.\ 236--240 \& 271--277]{Taylor1997}.
%
%
The history of statistical tests in particle physics is surveyed by Franklin~\cite{Franklin2013}.
}  For example, consider a sample of copper-64 with a total of $m$ many decays.  Model~\ref{M:decay} associates a probability of $B_{m,b}(k)$ with the possibility that $k$ of those decays are $\beta^-$ decays, and this probability distribution has a mean value of $mb$.  Given a real number $\alpha$ such that $0<\alpha<1$,
%
%
%
and given a measurement result $J(m,n)$ for $\mathrm{op}(f)$, a \emph{two-tailed statistical test} of Model~\ref{M:decay} is conducted by comparing $\alpha$ with $P(m,n,b)$, where
\begin{equation}\label{E:p-sum}
P(m,n,b)=\hspace{-23pt}\sum_{\substack{k\in\{0,1,2,\ldots,m\}\\\lvert k-mb\rvert\,\geq\,\lvert n-mb\rvert}}\hspace{-24pt}B_{m,b}(k).
\end{equation}
In particular, the test is said to \emph{reject} Model~\ref{M:decay}
%
%
with a significance level of $\alpha$ if and only if\thinspace\footnote{
%
%
This convention for rejection is used, for example, by Taylor~\cite[pp.\ 237 \& 272]{Taylor1997}.
%
%
Some authors, such as Dekking et al.~\cite[Sect.\ 26.2]{Dekking2005}, use a different convention and would reject the model if and only if $P(m,n,b)\leq\alpha$.
}
\begin{equation*}
P(m,n,b)<\alpha.
\end{equation*}
Typically, the value chosen for $\alpha$ is close to zero.  In this case, a rejection of Model~\ref{M:decay} implies that in a sample with a total of $m$ many decays, the model only associates a small probability $P(m,n,b)$ with the possibility that the number of $\beta^-$ decays is at least as far from the mean as $n$.  That is, this possibility is improbable according to the model.

As an alternate way to formalize this statistical test, consider any real numbers $\alpha$ and $b$ such that $0<\alpha<1$ and $0\leq b\leq1$.  Let $\mathcal{B}_{\alpha,b}$ be a submodel of Model~\ref{M:decay} whose set of states is the set of all $\langle i,j,q\rangle\in\lvert\mathfrak{A}_b\rvert_S$ such that
\begin{equation*}
P(i,j,b)\geq\alpha,
\end{equation*}
and let $\mathfrak{B}_{\alpha,b}$ be the structure of $\mathcal{B}_{\alpha,b}$.
%
%
Then, for each measurement result $J(m,n)$ of $\mathrm{op}(f)$,\footnote{
Here we are assuming that $m\geq n$.
}
%
%
\begin{equation*}
P(m,n,b)<\alpha\quad\text{if and only if}\quad J(m,n)\notin\mathrm{ran}(\,f^{\mathfrak{B}_{\alpha,b}}).
\end{equation*}
That is, given any measurement result $J(m,n)$ for $\mathrm{op}(f)$, the two-tailed statistical test rejects Model~\ref{M:decay} with a significance level of $\alpha$ if and only if $J(m,n)\notin\mathrm{ran}(\,f^{\mathfrak{B}_{\alpha,b}})$.  This fact allows statements about the rejection of Model~\ref{M:decay} to be expressed as statements about $\mathcal{B}_{\alpha,b}$.  And in this sense, the model $\mathcal{B}_{\alpha,b}$ provides an alternate way to formalize the two-tailed statistical test.  For example, the statement that there exists a measurement result for which the statistical test rejects Model~\ref{M:decay} with a significance level of $\alpha$ can be expressed by stating that $\mathcal{B}_{\alpha,b}$ is not faithful.  Or equivalently, $P(m,n,b)\geq\alpha$ for every measurement result $J(m,n)$ of $\mathrm{op}(f)$ if and only if $\mathcal{B}_{\alpha,b}$ is faithful.

Given values for $\alpha$ and $b$, the faithfulness of $\mathcal{B}_{\alpha,b}$ is determined by the set $O_f$ that was introduced in Section~\ref{S:cp-models}.  For example, if $O_f$ is the set
\begin{equation}\label{E:universe}
\bigl\{\,J(m,n)\,\bigm\vert\,m\geq n\;\;\mathrm{and}\;\;m\in\mathbb{N}\;\;\mathrm{and}\;\;n\in\mathbb{N}\,\bigr\}
\end{equation}
then, for every choice of $\alpha$ and $b$,
%
%
there exists a measurement result $J(m,n)\in O_f$ such that $P(m,n,b)<\alpha$.
In this case, for every choice of $\alpha$ and $b$, the model $\mathcal{B}_{\alpha,b}$ is not faithful.  But, if $b=1$ and $O_f=\{\,J(m,m)\mid m\in\mathbb{N}\,\}$, then $P(m,n,b)=1$ for every $J(m,n)\in O_f$.  In this case, $\mathcal{B}_{\alpha,b}$ is faithful for every choice of $\alpha$.

As another example, if $0<b<1$ and if $O_f$ is any nonempty finite subset of~\eqref{E:universe}, then there exists a positive real number $\alpha$ such that
\begin{equation*}
\min\bigl\{\,P(m,n,b)\,\bigm\vert\,J(m,n)\in O_f\,\bigr\}>\alpha.
\end{equation*}
%
%
%
%
And if $O_f$ is the empty set, then it is vacuously true that $P(m,n,b)\geq\alpha$ for every $J(m,n)\in O_f$.  Therefore, if $0<b<1$ and if $O_f$ is any finite subset of~\eqref{E:universe}, then there exists an $\alpha$ such that $\mathcal{B}_{\alpha,b}$ is faithful.
%
%
Note that there are various plausible circumstances in which one might suppose that $O_f$ is finite.  In particular, if the entire observable universe is finite, then $O_f$ is necessarily a finite set.\footnote{
We say that the observable universe is finite if and only if every maximally faithful nonnegative integer physical model is observationally equivalent to a nonnegative integer physical model that has a finite set of states.
%
%
}

A \emph{statistical estimator} can be used to estimate a probability or other parameter that appears in a statistical model.\footnote{
Lyons~\cite[pp.\ 47--48]{Lyons2012} describes the methods of statistical parameter estimation that are most commonly used in particle physics.  See Casella and Berger~\cite[Chaps.\ 7 \& 9]{Casella2002} for the basic theory of statistical estimators.
%
%
}  For example, consider any measurement result $J(m,n)$ for $\mathrm{op}(f)$ where $m\geq n$, and consider any significance level $\alpha$.  Let $B_{m,n,\alpha}$ be the set of all $b$ in the closed interval $[\,0\,;1\,]$ such that the measurement result $J(m,n)$ is consistent with the claim that $\mathcal{B}_{\alpha,b}$ is faithful.  That is, let
\begin{equation*}
B_{m,n,\alpha}=\bigl\{\,b\in[\,0\,;1\,]\,\bigm\vert\,P(m,n,b)\geq\alpha\,\bigr\}.
\end{equation*}
From the assumption that $\mathcal{B}_{\alpha,b}$ is faithful for some $b\in[\,0\,;1\,]$, we may deduce that this $b$ is in the interval $[\,r\,;s\,]$, where $r$ is the greatest lower bound of $B_{m,n,\alpha}$, and $s$ is the least upper bound of $B_{m,n,\alpha}$.  Under that assumption, the interval $[\,r\,;s\,]$ provides an estimate for the branching ratio $b$.  For example, if $\alpha=1/3$ and $J(m,n)=J(3,2)$, then $B_{m,n,\alpha}$ is the set of all $b\in[\,0\,;1\,]$ such that $P(3,2,b)\geq\tfrac{1}{3}$.  It then follows (see Figure~\ref{F:p-3-2})
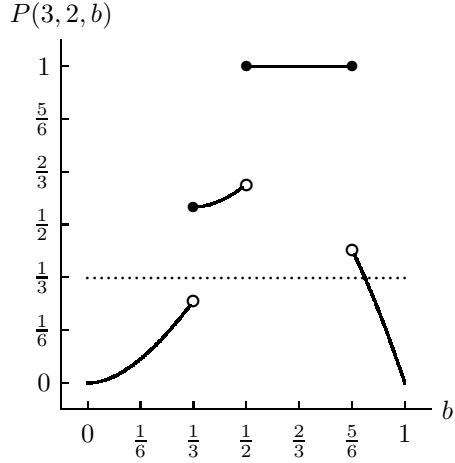
\begin{figure}[t]
\centering
\begin{picture}(166,177)(-28,-33)
%
%
\thinlines
\put(-10,-10){\line(1,0){140}}
\put(-10,-10){\line(0,1){140}}
%
\put(134,-10){\makebox(0,0)[l]{$b$}}
\put(-10,134){\makebox(0,0)[b]{$P(3,2,b)$}}
%
\thinlines
\multiput(0,-10)(20,0){7}{\line(0,1){3}}
\multiput(-10,0)(0,20){7}{\line(1,0){3}}
%
\put(0,-16){\makebox(0,0)[t]{$0$}}
\put(20,-16){\makebox(0,0)[t]{$\tfrac{1}{6}$}}
\put(40,-16){\makebox(0,0)[t]{$\tfrac{1}{3}$}}
\put(60,-16){\makebox(0,0)[t]{$\tfrac{1}{2}$}}
\put(80,-16){\makebox(0,0)[t]{$\tfrac{2}{3}$}}
\put(100,-16){\makebox(0,0)[t]{$\tfrac{5}{6}$}}
\put(120,-16){\makebox(0,0)[t]{$1$}}
%
\put(-14,0){\makebox(0,0)[r]{$0$}}
\put(-14,20){\makebox(0,0)[r]{$\tfrac{1}{6}$}}
\put(-14,40){\makebox(0,0)[r]{$\tfrac{1}{3}$}}
\put(-14,60){\makebox(0,0)[r]{$\tfrac{1}{2}$}}
\put(-14,80){\makebox(0,0)[r]{$\tfrac{2}{3}$}}
\put(-14,100){\makebox(0,0)[r]{$\tfrac{5}{6}$}}
\put(-14,120){\makebox(0,0)[r]{$1$}}
%
\multiput(0,40)(2.667,0){46}{\circle*{1}}
%
\thicklines
\put(40,31.111){\circle{4}}
\put(60,75.000){\circle{4}}
\put(100,50.556){\circle{4}}
%
\put(40,66.667){\circle*{4}}
\put(60,120.000){\circle*{4}}
\put(100,120.000){\circle*{4}}
%
\thicklines
\qbezier(0.000,0.000)(16.308,0.000)(38.794,29.516)
\qbezier(40.000,66.667)(48.278,66.667)(58.384,73.822)
\put(60,120){\line(1,0){40}}
\qbezier(100.859,48.750)(110.706,27.881)(120.000,0.000)
\end{picture}
\caption{The graph of $P(3,2,b)$ as a function of $b$.
%
%
Note that $P(3,2,b)\geq\tfrac{1}{3}$ if and only if the corresponding point in the graph is on or above the dotted line.  The graph intersects the line at $b=\sqrt[3]{\tfrac{2}{3}}$.  There are discontinuities at $b=\tfrac{1}{3}$, $\tfrac{1}{2}$, and $\tfrac{5}{6}$.}
\label{F:p-3-2}
\end{figure}
that
\begin{equation*} \bigl[\,r\,;s\,\bigr]=\Bigl[\,\tfrac{1}{3}\,;\sqrt[3]{\tfrac{2}{3}}\,\,\Bigr].
\end{equation*}
This function for mapping a measurement result $J(m,n)$ to an interval $[\,r\,;s\,]$ is an \emph{interval estimator} for the branching ratio in Model~\ref{M:decay}.\footnote{
Some other closely-related interval estimators are described, for example, by Crow~\cite{Crow1956}.
}  As we will show, there is an operation for measuring this interval estimate if $[\,r\,;s\,]$ is suitably encoded as a nonnegative integer.

First, consider any positive integer $m$, any nonnegative integers $n\leq m$ and $i<2m$, and any real numbers $b_1$ and $b_2$ in the open interval $\bigl(\tfrac{i}{2m}\,;\tfrac{i+1}{2m}\bigr)$.  Then, for every $k\in\{0,1,2,\ldots,m\}$,
\begin{equation*}
\lvert k-mb_1\rvert\geq\lvert n-mb_1\rvert\quad\text{if and only if}\quad\lvert k-mb_2\rvert\geq\lvert n-mb_2\rvert.
\end{equation*}
Hence, by equation~\eqref{E:p-sum}, there is a polynomial function $\psi_i$ such that $\psi_i(b)=P(m,n,b)$ for every $b\in\bigl(\tfrac{i}{2m}\,;\tfrac{i+1}{2m}\bigr)$.  That is, $P(m,n,b)$ is a piecewise polynomial function of $b$ where the polynomials' coefficients are rational numbers.  Also, for each nonnegative integer $i\leq2m$ there is a rational number $\rho_i$ such that $\rho_i=P(m,n,\tfrac{i}{2m})$.  Thus, the statement that $b\in B_{m,n,\alpha}$ can be expressed in the language of elementary real analysis as
\begin{equation}\label{E:F}
\bigvee_{i<2m}\Bigl(\,\psi_i(b)\geq\alpha\;\wedge\;\tfrac{i}{2m}<b\;\wedge\;b<\tfrac{i+1}{2m}\,\Bigr)\;\vee\;\bigvee_{i\leq2m}\Bigl(\,\rho_i\geq\alpha\;\wedge\;b=\tfrac{i}{2m}\,\Bigr),
\end{equation}
where $\alpha$ is a constant of sort $R$ and $b$ is a variable of sort $R$.\footnote{
Note that rational number constants and the $\geq$ and $<$ predicates are definable in elementary real analysis.
}  Moreover, there is an effective procedure that produces this formula given a positive integer $m$ and a nonnegative integer $n\leq m$.

Next, consider any $J(m,n)$ in the set $\mathrm{ran}(\,f^{\mathfrak{A}_b})$ of possible measurement results allowed by Model~\ref{M:decay}.  If $m=0$ then let $F_\alpha(b)$ denote the formula $0\leq b\;\wedge\;b\leq1$.  Otherwise, let $F_\alpha(b)$ denote formula~\eqref{E:F}.  The statement that $s$ is the least upper bound of $B_{m,n,\alpha}$ can be expressed in elementary real analysis as
\begin{equation}\label{E:lub}
\forall_R\,x\,\bigl(\,F_\alpha(x)\;\rightarrow\;x\leq s\,\bigr)\;\wedge\;\forall_R\,x\,\Bigl(\,x<s\;\rightarrow\;\exists_R\,y\,\bigl(\,F_\alpha(y)\;\wedge\;y>x\,\bigr)\,\Bigr).
\end{equation}
But if $\alpha$ is a rational number, then this is also a formula in the language of real closed fields.  In this case, the formula can be put into a prenex normal form and Collins' quantifier elimination algorithm~\cite{Collins1975} can be applied to obtain\footnote{
The set $B_{m,n,\alpha}$ is nonempty,
%
%
and every member of $B_{m,n,\alpha}$ has $1$ as an upper bound.  Hence, $B_{m,n,\alpha}$ has a unique least upper bound $s$.  In this case~\cite[pp.\ 151 \& 159--160]{Collins1975}, Collins' algorithm outputs the quantifier-free formula $\bigl(\,\varphi(s)=0\;\wedge\;c_2s-c_1>0\;\wedge\;d_2s-d_1<0\,\bigr)\;\vee\;1=0$.  This formula is equivalent to formula~\eqref{E:lub}.
} a squarefree polynomial $\varphi(x)$ that has integer coefficients, and to obtain a rational open interval $\bigl(\tfrac{c_1}{c_2}\,;\tfrac{d_1}{d_2}\bigr)$.  The polynomial has $s$ as a root, and the interval isolates this root.  Given the polynomial and isolating interval, a root refinement algorithm~\cite[Sect.\ 7]{Heindel1971} can be used to obtain a program for a nested oracle for $s$, written in a computationally universal programming language.  The G\"{o}del number $\overline{s}$ of this program
%
%
encodes $s$.  And a similar procedure can be used to obtain an encoding $\overline{r}$ of the greatest lower bound $r$.  Hence, for each rational number $\alpha$ such that $0<\alpha<1$, there is a recursive partial function $h$ that maps each $J(m,n)\in\mathrm{ran}(\,f^{\mathfrak{A}_b})$ to $J(\,\overline{r},\overline{s}\,)$.

Now, given any rational number $\alpha$ with $0<\alpha<1$, and given any real number $b$ with $0\leq b\leq1$, let $\mathcal{C}_{\alpha,b}$ to be the expansion of Model~\ref{M:decay} that is obtained by introducing the derived observable quantity
\begin{equation*}
g^{\mathfrak{C}_{\alpha,b}}=h\circ f^{\mathfrak{A}_b}
\end{equation*}
with a natural measuring operation, where $\mathfrak{C}_{\alpha,b}$ denotes the structure of $\mathcal{C}_{\alpha,b}$.  By the definition of a natural measuring operation, $\mathrm{op}(g)$ is an operation that measures the total number $m$ of copper-64 atoms that have undergone radioactive decay within a sample, together with the number $n$ of those atoms that have undergone $\beta^-$ decay, and then uses these values to calculate $J(\,\overline{r},\overline{s}\,)$.  This measurement result is an encoding of the closed interval $[\,r\,;s\,]$, and in this sense, $\mathrm{op}(g)$ measures the interval estimate of the branching ratio $b$.  Moreover, if $\mathcal{B}_{\alpha,b}$ is faithful then $b$ is contained within this interval.  Similar approaches can be used to measure interval estimates for parameters in other commonly encountered statistical models.

Incidentally, both Model~\ref{M:decay} and $\mathcal{C}_{\alpha,b}$ are isomorphic to computable physical models under the isomorphism that maps $\langle i,j,q\rangle$ to $J(i,j)$.
%
%

\section*{Acknowledgements}

We thank the anonymous reviewers for suggesting several improvements to this article.

\bibliographystyle{amsplain}
\bibliography{SemanticsOfCPM}

\end{document}